\documentclass[preprint,12pt]{elsarticle}


\usepackage{amsmath, amsthm, amssymb,bbm}


    \usepackage{color}  
     \definecolor{red}{rgb}{0.9,0,0}
     \definecolor{green}{rgb}{0,0.6,0}
     \definecolor{rb}{rgb}{0.6,0,0.2}     
     \definecolor{pass}{rgb}{0,0,0.5}
     \definecolor{blue}{rgb}{0,0,0}
\usepackage{enumitem}

\newcommand{\ts}{\textstyle}

\newcommand{\pt}{\partial}
\newcommand {\eps} {\varepsilon}

\newcommand{\LL}{{\mathcal L}}
\newcommand{\HH}{{\mathcal H}}

\newcommand {\beq} {\begin{equation}}
\newcommand {\eeq} {\end{equation}}
\newcommand {\beqa} {\begin{eqnarray}}
\newcommand {\eeqa} {\end{eqnarray}}
\newcommand {\beqann} {\begin{eqnarray*}}
\newcommand {\eeqann} {\end{eqnarray*}}

\numberwithin{table}{section}

\numberwithin{equation}{section}

\newtheorem{theorem}{Theorem}[section]
\newtheorem{lemma}[theorem]{Lemma}
\theoremstyle{remark}
\newtheorem{remark}[theorem]{Remark}

\journal{J. Comput. Appl. Math.}

\begin{document}

\begin{frontmatter}



\title{How accurate are finite elements on anisotropic triangulations in the maximum norm?\tnoteref{SFI}}
\tnotetext[SFI]{The author wishes to acknowledge financial support
from Science Foundation Ireland Grant SFI/12/IA/1683.}


\author{Natalia Kopteva}

\address{University of Limerick, Limerick, Ireland; {natalia.kopteva@ul.ie}.}

\begin{abstract}
In \cite{Kopt_mc14}
a counterexample of an anisotropic triangulation was given on which
the exact solution has a second-order error of linear interpolation,
while
the computed solution obtained using linear finite elements is only first-order pointwise accurate.
This example was given in the context of a singularly perturbed reaction-diffusion equation.
%
In this paper, we present further examples of unanticipated pointwise convergence behaviour of Lagrange finite elements on anisotropic triangulations.
In particular,
we  show that
linear finite elements may exhibit lower than expected orders of convergence
for the Laplace equation, as well as for certain singular
equations, and their accuracy depends not only on the linear interpolation error, but also on the mesh topology.
%
%
Furthermore, we  demonstrate that 
 pointwise convergence rates which are worse than one might expect
are also observed when higher-order finite elements are employed on anisotropic meshes.
A theoretical justification  will be given for some of the observed numerical phenomena.
\end{abstract}

\begin{keyword}
anisotropic triangulation \sep maximum norm
\sep layer solutions
\sep
ani\-so\-tro\-pic diffusion \sep
Lagrange finite ele\-ments
\end{keyword}

\end{frontmatter}


\section{Introduction}
\label{sec_intro}
There is a perception in the finite element community, which the author of this article  also shared until recently, that
the finite element solution error in the maximum norm (and, in fact, any other norm) is closely related to the corresponding interpolation error.
It is worth noting that
 an almost best approximation property of finite element solutions in the maximum norm
has been rigourously proved (with a logarithmic factor in the case of linear elements)
for some equations on quasi-uniform meshes \cite{SchWa_best,SchWa_rd}.
To be more precise, with the exact solution $u$
and the corresponding computed solution $u_h$ in a finite element space $S_h$,
one enjoys the error bound
\beq\label{ScjWal}
\|u-u_h\|_{L_\infty(\Omega)}\le  C\ell_h^{\bar r}\,\inf_{\chi\in S_h}\|u-\chi\|_{L_\infty(\Omega)},
\eeq
see \cite{SchWa_best} for the Laplace equation $-\triangle u=f$ with $\ell_h=\ln(1/ h)$,
and
\cite{SchWa_rd} for  singularly perturbed equations of type $-\eps^2\triangle u+u=f$ with $\ell_h=\ln(2+\eps/ h)$;
{\color{blue}see also \cite[Theorem~3.3.7]{ciarlet} and \cite[\S8.6]{BrenScott} for similar maximum norm error bounds}.
Here $h$ is the mesh diameter, $C$ is a positive constant, independent of $h$ and, in the latter case, $\eps$,
while
$\bar r=1$ for linear elements and $\bar r=0$ for higher-order elements in \cite{SchWa_best}, and
$\bar r=1$ in \cite{SchWa_rd}.

Importantly, the proofs of \eqref{ScjWal} were given only for quasi-uniform meshes, while
no such result is known for reasonably general strongly-anisotropic triangulations.
In fact, a counterexample of an anisotropic triangulation  was given in \cite{Kopt_mc14}
on which
the exact solution has a second-order error of linear interpolation,
while
the computed solution obtained using linear finite elements is only first-order pointwise accurate.
Interestingly, it was also shown that, unlike the case of shape-regular meshes,
the convergence rates on anisotropic meshes may depend not only on the linear interpolation error, but also on the mesh topology.
(The latter is also reflected in Fig.\,\ref{fig_higher_order}, left, when $r=1$, {\color{blue}which corresponds to linear finite elements}, and the triangulations of types A and C of Fig.\,\ref{fig_ABC} are used.)

\begin{figure}[b!]
\mbox{\hspace*{-0.9cm}\includegraphics[width=1.05\textwidth]{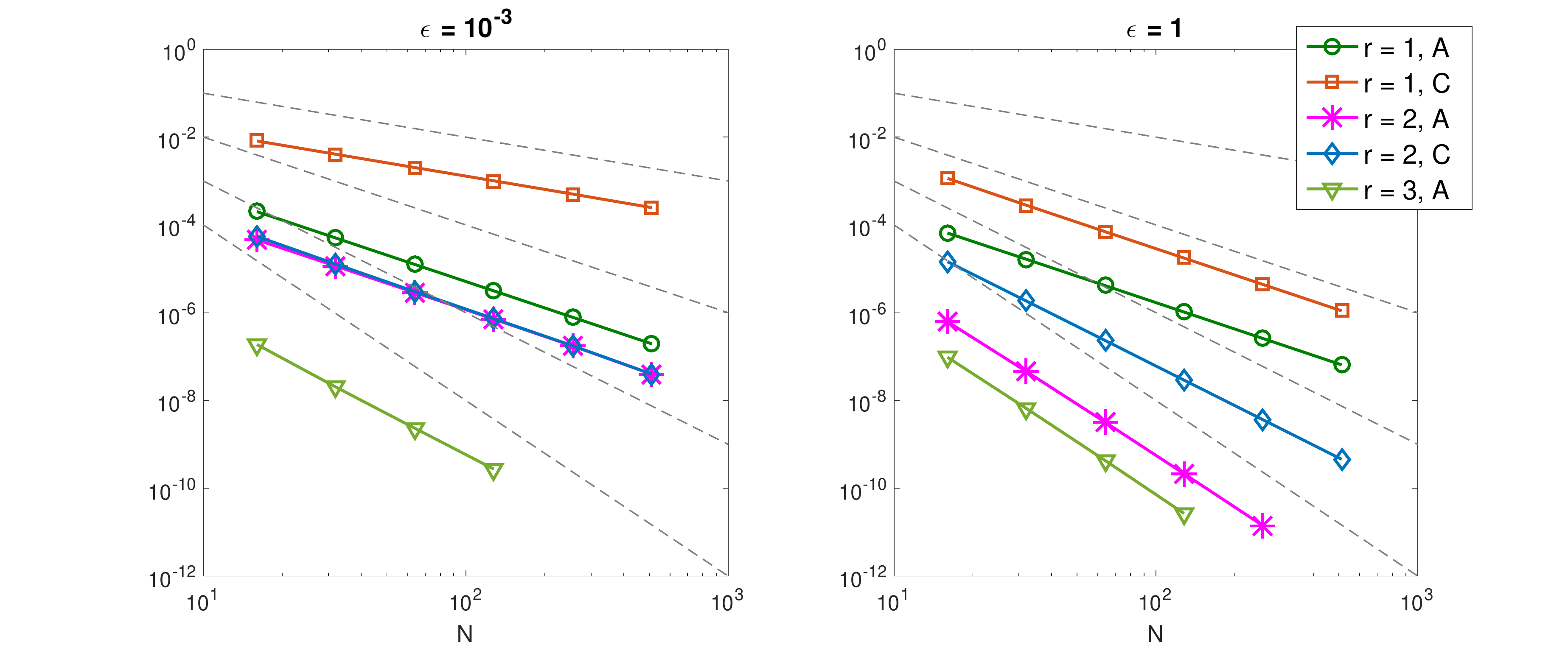}}%
\vspace{-0.3cm}
 \caption{\label{fig_higher_order}
Maximum nodal errors of Lagrange finite elements of degree $r=1,2,3$ on uniform anisotropic triangulations of types A and C
in the domain $(0,2\eps)\times(0,1)$
for $u=e^{-x/\eps}$, $\eps=10^{-3}$ (left) and $\eps=1$ (right); the dashed lines correspond to $N^{-p}$ for $p=1,2,3,4$,
{\color{blue}where $N$ and $\frac14N$  nodes are used in the $x$- and $y$-directions}.}
 \end{figure}

The example in \cite{Kopt_mc14} was given in the context of a singularly perturbed reaction-diffusion equation, and only linear finite elements were considered.
The purpose of  this article is to give further examples of unanticipated convergence behaviour of Lagrange finite element methods on anisotropic triangulations.

\begin{itemize}

\item
We  show that
linear finite elements may be only first-order pointwise accurate also for the Laplace equation, and less than second-order accurate for certain singular equations.

\item
Our findings for the Laplace equation immediately imply that
linear finite elements may be only first-order pointwise accurate when
 applied to an anisotropic diffusion equation on quasi-uniform triangulations.

\item
Effects of the lumped-mass quadrature on the accuracy of linear finite elements are investigated.
It is shown that the lumped-mass quadrature may in certain situations (although not always) improve the orders of convergence from one to two.

\item
We  demonstrate that 
 pointwise convergence rates which are inconsistent with \eqref{ScjWal}, and thus worse than one might expect,
are also observed when higher-order finite elements are employed on anisotropic triangulations.
\end{itemize}

   \begin{figure}[tb]
\hspace*{-0.3cm}
\includegraphics[width=0.36\textwidth]{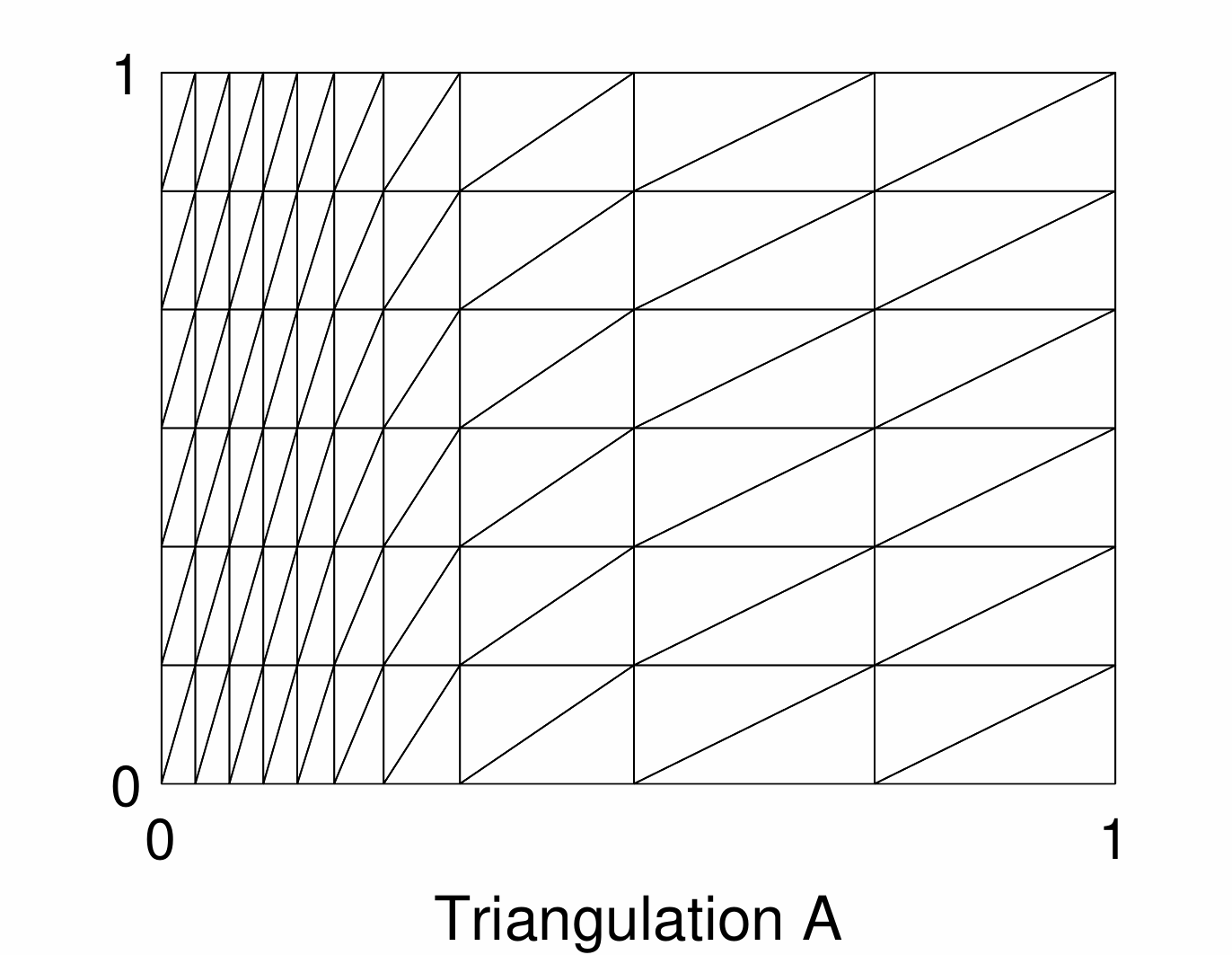}
\hfill
\hspace{-0.7cm}
\includegraphics[width=0.36\textwidth]{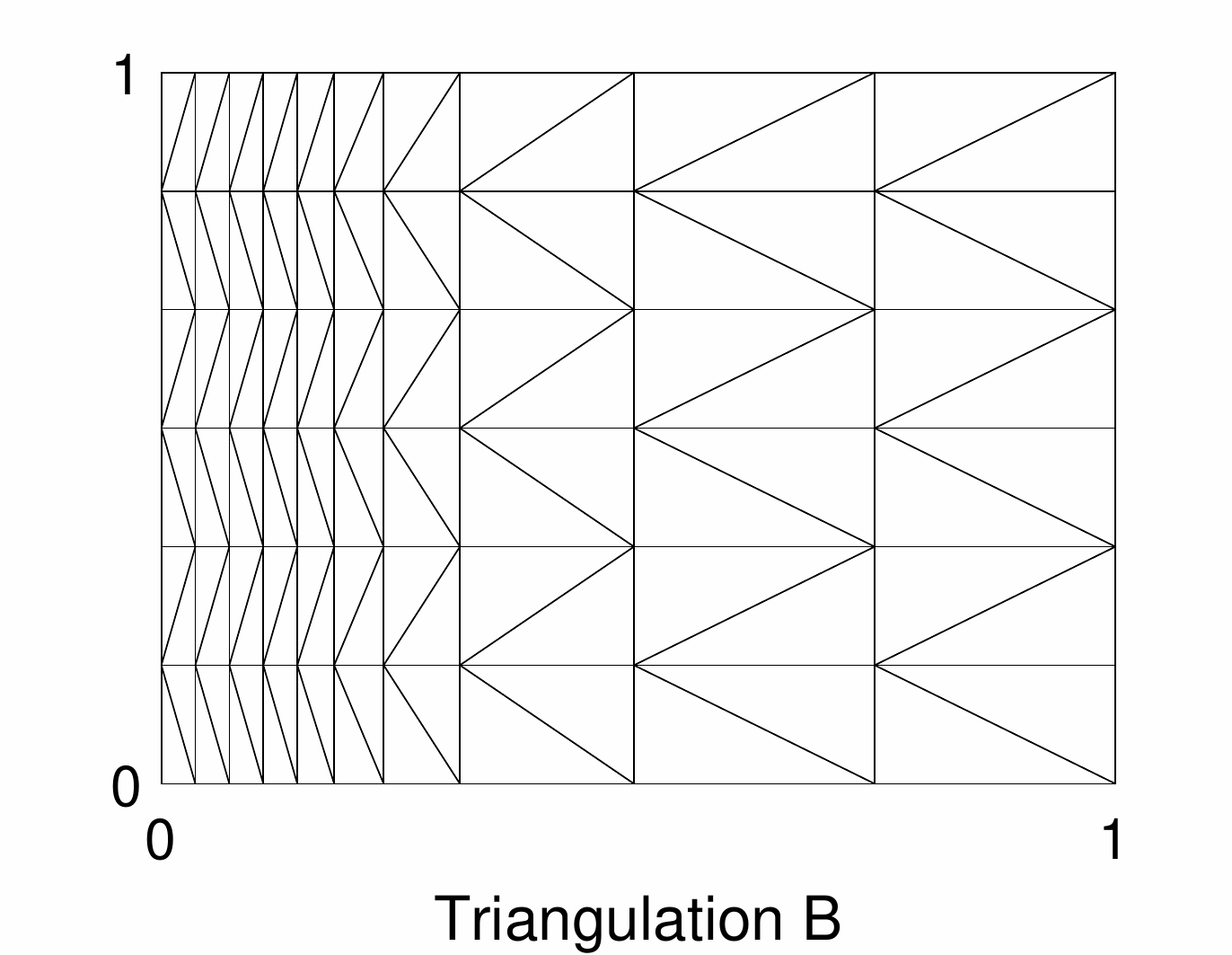}
\hfill
\hspace{-0.7cm}
\includegraphics[width=0.36\textwidth]{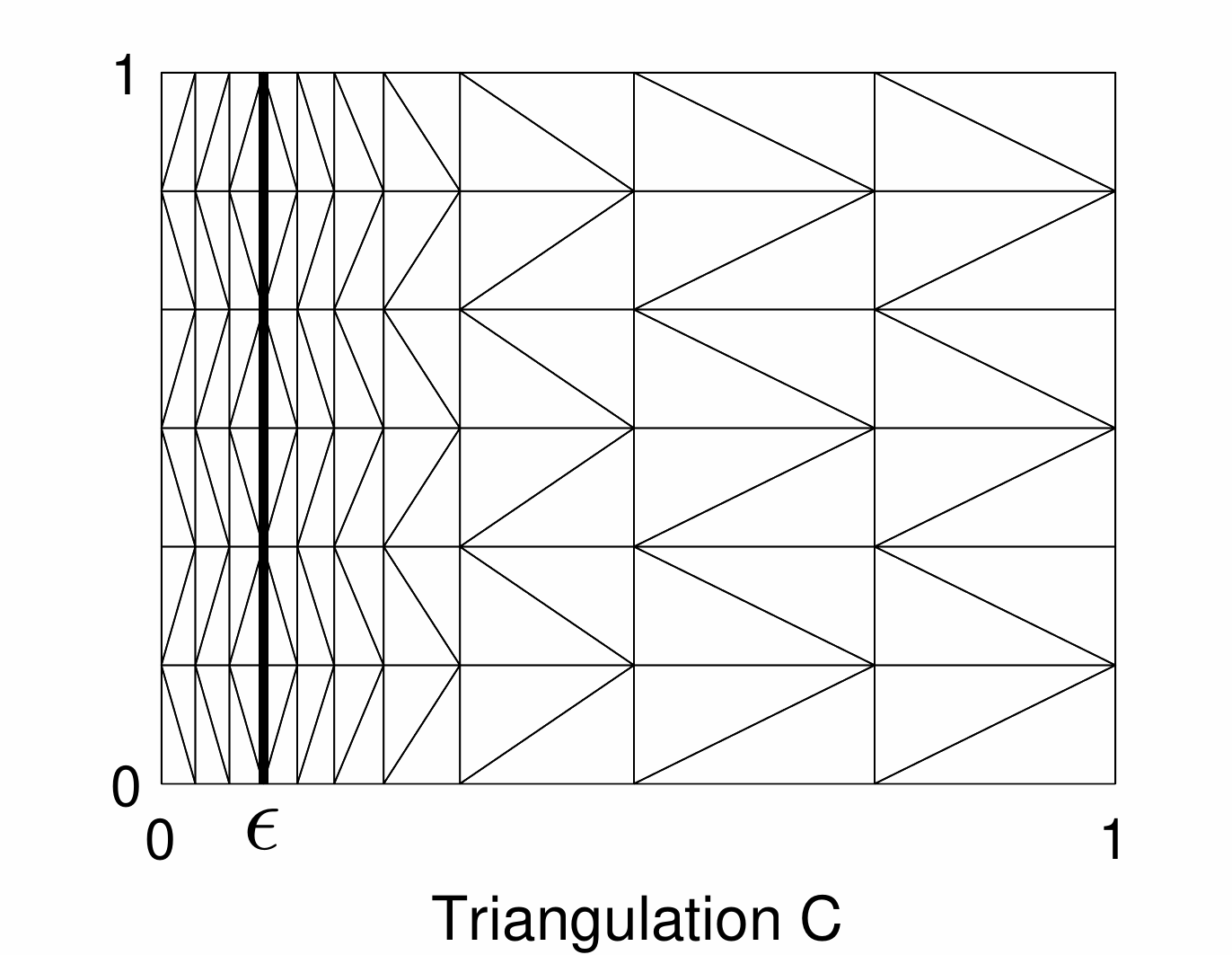}
\hspace{-0.2cm}\vspace{-0.2cm}
 \caption{\label{fig_ABC}
Triangulations 
A, B and C are obtained from the same  rectangular tensor-product grid {\color{blue}(to be specified below)} by
drawing diagonals in a different manner; the number of intervals in each coordinate direction is ${\mathcal O}(N)$.}
 \end{figure}

We shall consider standard Galerkin finite element approximations 
with
Lagrange finite element spaces of fixed degree $r\ge 1$, as well as the lumped-mass version of linear elements. 
As we are interested in counter-examples, our consideration will be restricted to rectangular domains and triangulations obtained from certain
 tensor-product grids by drawing diagonals in each rectangular element (as in Fig.\,\ref{fig_ABC}).
 The underlying tensor-product grids will always have ${\mathcal O}(N)$ intervals in each coordinate direction, and will be
 chosen to ensure that, once the diagonals are drawn in any manner in each rectangular element,
 the interpolation error bound $\|u-u^I\|\le CN^{-(r+1)}$
 holds true for the considered exact solution $u$ and its Lagrange interpolant $u^I\in S_h$ of degree $r$.
For example, when linear finite elements are considered, we have $\|u-u^I\|\le C N^{-2}$, and, furthermore, the considered triangulations are quasi-uniform
{\color{blue}under a Hessian-based metric (with the exception of Shishkin meshes); see \S\ref{sec_hessian}.}
The latter implies that the grids are not made unnecessarily over-anisotropic. It also implies that similar grids may be expected to be obtained using
an anisotropic mesh adaptation.

Some of our findings for Lagrange finite elements of degree $r=1,2,3$ are summarized in Fig.\,\ref{fig_higher_order} (see \S\ref{sec_linear} and \S\ref{sec_higher} for further details).
Here the exact solution is $e^{-x/\eps}$ in the domain $(0,2\eps)\times (0,1)$,
and the triangulations of types A and C of Fig.\,\ref{fig_ABC} are obtained from a uniform rectangular tensor-product grid.
When $\eps=1$ (right), with the meshes remaining quasi-uniform, we observe a textbook behaviour consistent with \eqref{ScjWal}, with the convergence rates close $r+1$.
The only exception is the case of  quadratic elements on triangulation~A, when the nodal superconvergence rates are close to $\mbox{$r+2$}=4$.
By contrast, once one switches to a much smaller $\eps=10^{-3}$ (left), and the triangulations accordingly  become anisotropic, the convergence rates deteriorate to only $r$, with the exception of linear elements
on  triangulation~A.

The paper is organised as follows. The effects of the mesh topology (i.e. of how the diagonals are drawn in rectangular elements)
on the convergence rates
of linear elements are
investigated in \S\ref{sec_linear}. A singularly pertubed equation, the Laplace equation and a singluar equation will be considered.
Next, in \S\ref{sec_quadr},
we address the effects of the lump-mass quadrature on the accuracy of linear finite element solutions.
In \S\ref{sec_higher} quadratic and cubic finite elements are considered.
A theoretical justification for some of the presented numerical phenomena,
which relies on finite difference representations of the considered finite element methods, will be given in \S\ref{sec_theory}.
{\color{blue}Finally, the tested meshes will be discussed in view of Hessian-based metrics in \S\ref{sec_hessian}.}

\section{Linear finite elements: effects of mesh topology}\label{sec_linear}
In this section, triangulations of types shown in Fig.\,\ref{fig_ABC}
in rectangular domains will be compared,
with the underlying rectangular mesh
being
the tensor-product of a  mesh $\{x_i\}_{i=0}^N$ in the $x$-direction and
 the uniform mesh $\{\frac j M\}_{j=0}^M$
 in the $y$-direction, where $M=\frac14N$.
 Two domains will be considered.
  When $\Omega=(0,2\eps)\times(0,1)$, in most cases, we let the mesh $\{x_i\}$ be uniform on $(0,2\eps)$.
 When $\Omega=(0,1)^2$, the mesh $\{x_i\}$ will be a version of the Bakhvalov mesh  \cite{Bak}, which we now describe.
\smallskip

{
\begin{table}[tbp]
\begin{center}
\caption{\label{lin_Bakh_table}Linear elements for equation~\eqref{spp}
on the Bakhvalov tensor-product mesh in $(0,1)^2$  with $M=\frac14 N$: 
maximum nodal errors and
computational rates $p$ in $N^{-p}$}
\begin{tabular}{crcrrrcrrr}
\hline\strut
&&&\multicolumn{3}{l}{\rule{0pt}{12pt}Triangulation A}
&&\multicolumn{3}{l}{\rule{0pt}{12pt}Triangulation C}
\\
&$ N$&&$\varepsilon=1$&
$\varepsilon=2^{-8}$& $\varepsilon= 2^{-16}$
&&$\varepsilon=1$&
$\varepsilon=2^{-8}$& $\varepsilon\le 2^{-16}$
\\\hline
\strut\rule{0pt}{13pt}%
%
%
\strut\rule{0pt}{13pt}%
&  32&&3.56e-6	&5.57e-4	&8.28e-4	&&2.46e-4	&1.28e-2	&1.29e-2	\\	
&&&1.99	&2.12	&2.03	&&2.00	&\bf1.02	&\bf1.01	\\
&  64&&8.94e-7	&1.28e-4	&2.02e-4	&&6.14e-5	&6.34e-3	&6.42e-3	\\	
&&&2.00	&2.02	&2.04	&&2.00	&\bf1.04	&\bf1.01	\\
& 128&&2.24e-7	&3.15e-5	&4.91e-5	&&1.54e-5	&3.08e-3	&3.20e-3	\\	
&&&2.00	&2.06	&2.05	&&2.00	&\bf1.13	&\bf1.00	\\
& 256&&5.60e-8	&7.55e-6	&1.19e-5	&&3.85e-6	&1.40e-3	&1.59e-3	\\	
[-3.3cm]
\rotatebox{90}{\rule{0pt}{9pt}~\,\,No quadrature\,\,~}\\
\hline
\strut\rule{0pt}{13pt}%
&  32&&3.55e-6	&4.58e-4	&7.02e-4	&&2.47e-4	&8.56e-3	&8.58e-3	\\	
&&&1.99	&2.16	&2.04	&&2.00	&\bf0.96	&\bf0.96	\\
&  64&&8.94e-7	&1.03e-4	&1.71e-4	&&6.17e-5	&4.39e-3	&4.41e-3	\\	
&&&2.00	&2.19	&2.04	&&2.00	&\bf1.00	&\bf0.98	\\
& 128&&2.24e-7	&2.25e-5	&4.15e-5	&&1.54e-5	&2.19e-3	&2.23e-3	\\		
&&&2.00	&2.01	&2.05	&&2.00	&\bf1.06	&\bf0.99	\\	
& 256&&5.60e-8	&5.60e-6	&1.00e-5	&&3.85e-6	&1.05e-3	&1.12e-3	\\	
[-3.4cm]
\rotatebox{90}{\rule{0pt}{9pt}~~\,Mass lumping\,~~}\\
\hline
\end{tabular}
\end{center}
\end{table}
}

\noindent
{\it Bakhvalov mesh} \cite{Bak}.
{\color{blue}Set $\sigma:= \eps(r+1)(|\ln\eps|+1)$.
If $\sigma\ge \frac34$, let $\{x_i\}_{i=0}^{N}$ be a uniform mesh on $[0,1]$.
Otherwise, define the mesh $\{x_i\}_{i=0}^{3N/4}$ on $[0,\sigma]$ by
$x_i:=x\bigl([2-\eps]\,{\ts\frac{3i}{4N}}\bigr)$, where
$$
x(t):=
 \eps(r+1)
 \left\{\begin{array}{cl}
t,& t\in[0,1]\\{}
[1-\ln(2-t)],&t\in[1,2-\eps]
 \end{array}\right.
 \!\!.
$$
The remaining part of the mesh $\{x_i\}_{i=3N/4}^{N}$ on $[\sigma,1]$ is uniform.
Note that on $[0,(r+1)\eps]$ this mesh is also uniform, with $x_i-x_{i-1}={\mathcal O}(\eps N^{-1})$,
while on $[(r+1)\eps,\sigma]$ the mesh size is gradually increasing.
Note that a more standard Bakvalov mesh
has a gradually increasing mesh size in the entire layer region. Our version of this mesh
satisfies \eqref{Omega_0}, and so falls within the scope of the theoretical results of \S\ref{sec_hessian}.}

\begin{remark}[Bakhvalov mesh]\label{rem_Bakh}\color{blue}
Suppose that $u=e^{-x/\eps}$ and $r=1$.
Then   a calculation shows that $x'(t)=2\eps$ for $t<1$ and $x'(t)=2\eps/(2-t)$ for $t>1$.
Also, $|u''|^{1/2}=\eps^{-1}e^{-x/(2\eps)}$ so
$|u''(x(t))|^{1/2}\,x'(t)$ is $2e^{-t}$ for $t<1$ and $2e^{-1}$ otherwise.
In other words, $\{x_i\}$ is quasi-uniform under the 1d Hessian metric on $[0,\sigma]$, while $|u''|^{1/2}\le e^{-1}$ for $x\ge\sigma$.
\end{remark}

{
\begin{table}[tb]
\begin{center}
\caption{\label{lin_UNItable}Linear elements for equation~\eqref{spp} on the uniform  mesh in $(0,2\eps)\times(0,1)$,
$M=\frac14 N$:
maximum nodal errors and
computational rates $p$ in $N^{-p}$}
\vspace{0.3cm}
\begin{tabular}{crcrrrcrrr}
\hline\strut
&&&\multicolumn{3}{l}{\rule{0pt}{12pt}Triangulation A}
&&\multicolumn{3}{l}{\rule{0pt}{12pt}Triangulation C}
\\
&$ N$&&$\varepsilon=1$&
$\varepsilon=2^{-8}$& $\varepsilon\le 2^{-16}$
&&$\varepsilon=1$&
$\varepsilon=2^{-8}$& $\varepsilon\le 2^{-16}$
\\\hline
\strut\rule{0pt}{13pt}%
&  32&&	1.65e-5	&5.02e-5	&5.02e-5&&2.81e-4	&4.07e-3	&4.07e-3\\	
&&&1.99	&2.00	&2.00&&	2.00	&\bf 1.01	&\bf1.01\\  	
&  64&&	4.15e-6	&1.25e-5	&1.26e-5&&7.02e-5	&2.02e-3	&2.03e-3\\	
&&&2.00	&2.01	&2.00&&	2.00	&\bf1.02	&\bf1.00\\
& 128&&	1.04e-6	&3.12e-6	&3.14e-6&&1.76e-5	&9.93e-4	&1.01e-3\\	
& &&2.00	&2.02	&2.00&&	2.00	&\bf1.08	&\bf1.00\\
& 256&&	2.59e-7	&7.69e-7	&7.85e-7&&4.41e-6	&4.70e-4	&5.05e-4\\
[-3.3cm]
\rotatebox{90}{\rule{0pt}{9pt}~\,\,No quadrature\,\,~}\\
\hline
\strut\rule{0pt}{13pt}%
&  32&&	1.65e-5	&4.77e-5	&4.77e-5	&&	2.89e-4	&2.98e-3	&2.99e-3\\	
&&&	1.99	&2.00	&2.00		&&2.02	&\bf1.02	&\bf1.02\\  	
&  64&&	4.14e-6	&1.19e-5	&1.19e-5	&&	7.15e-5	&1.47e-3	&1.48e-3\\	
&&&	2.00	&2.00	&2.00		&&2.01	&\bf1.02	&\bf1.01\\
& 128&&	1.04e-6	&2.98e-6	&2.98e-6	&&	1.78e-5	&7.26e-4	&7.34e-4\\	
& &&	2.00	&2.00	&2.00		&&2.00	&\bf1.05	&\bf1.00\\
& 256&&	2.59e-7	&7.46e-7	&7.46e-7	&&	4.43e-6	&3.51e-4	&3.66e-4\\
[-3.4cm]
\rotatebox{90}{\rule{0pt}{9pt}~~\,Mass lumping\,~~}\\
\hline
\end{tabular}
\end{center}
\end{table}
}

\subsection{Singularly perturbed equation}\label{ssec_lin_spp}

We start with the singularly perturbed reaction-diffusion equation considered in \cite{Kopt_mc14}
\beq\label{spp}
-\eps^2\triangle u+u=0,
\eeq
posed in a rectangular domain $\Omega$, subject to Dirichlet boundary conditions,
with the exact solution $u=e^{-x/\eps}$.

The maximum nodal errors of the linear finite element method applied  to this equation are presented in Tables~\ref{lin_Bakh_table} and~\ref{lin_UNItable}.
The Bakhvalov mesh in the domain $(0,1)^2$ and the uniform mesh in the domain $(0,2\eps)\times(0,1)$ were considered.
For the latter case, see also Fig.\,\ref{fig_higher_order}, $r=1$.
Note that here and in all our tables below, the notation $\eps\le 2^{-16}$ is used for $\eps=2^{-k}$ with $k=16,\ldots,24$,
and whenever the computational rates of convergence become substantially lower than $r+1$, they are highlighted in bold face.

We observe that when the triangulations of type~A are used, the convergence rates are consistent with the linear interpolation error, and thus with \eqref{ScjWal}.
Once we switch to the corresponding triangulations of type~C, the convergence rates deteriorate from $r+1=2$ to $1$ for small values of $\eps$.
We conclude that whether linear finite elements are used without quadrature or with lumped-mass quadrature,
their accuracy depends not only on the linear interpolation error, but also on the mesh topology.

{
\begin{table}[tbp]
\begin{center}
\caption{\label{Laplace_table}Laplace equation with the exact solution $u=e^{-x/\eps}$ in the domain $(0,2\eps)\times(0,1)$,
lumped-mass linear elements on the uniform mesh,
$M=\frac14 N$:
maximum nodal errors and
computational rates $p$ in $N^{-p}$
}
\vspace{0.3cm}
\begin{tabular}{rcrrrcrrr}
\hline\strut
&&\multicolumn{3}{l}{\rule{0pt}{12pt}Triangulation A}
&&\multicolumn{3}{l}{\rule{0pt}{12pt}Triangulation C}
\\
$ N$&&$\varepsilon=1$&
$\varepsilon=2^{-8}$& $\varepsilon\le 2^{-16}$
&&$\varepsilon=1$&
$\varepsilon=2^{-8}$& $\varepsilon\le 2^{-16}$
\\\hline
\strut\rule{0pt}{13pt}%
%
  64&&4.45e-6	&1.67e-5	&1.67e-5	&&7.17e-5	&1.92e-3	&1.93e-3	\\	
&&	2.00	&2.00	&2.00	&&2.01	&\bf1.02	&\bf1.01\\
 128&&1.11e-6	&4.17e-6	&4.17e-6	&&1.78e-5	&9.47e-4	&9.62e-4	\\	
&&	2.00	&2.00	&2.00	&&2.00	&\bf1.07	&\bf1.00	\\
 256&&2.79e-7	&1.04e-6	&1.04e-6	&&4.43e-6	&4.52e-4	&4.80e-4	\\
 &&	2.00	&2.00	&2.00	&&2.00	&\bf1.22	&\bf1.00\\
 512&&6.97e-8	&2.61e-7	&2.61e-7	&&1.11e-6	&1.95e-4	&2.40e-4	\\
\hline
\end{tabular}
\end{center}
\end{table}
}

{
\begin{table}[b!]
\begin{center}
\caption{\label{Laplace_table_equi}\color{blue}Laplace equation with the exact solution $u=e^{-x/\eps}$ in the domain $(0,2\eps)\times(0,1)$,
lumped-mass linear elements with $\{x_i\}$ uniform under the 1d Hessian metric:
maximum nodal errors and
computational rates $p$ in $N^{-p}$
}
\vspace{0.3cm}\color{blue}
\begin{tabular}{crcrrrcrrr}
\hline\strut
&&&\multicolumn{3}{l}{\rule{0pt}{12pt}Triangulation A}
&&\multicolumn{3}{l}{\rule{0pt}{12pt}Triangulation C}
\\
&$ N$&&$\varepsilon=1$&
$\varepsilon=2^{-8}$& $\varepsilon\le 2^{-16}$
&&$\varepsilon=1$&
$\varepsilon=2^{-8}$& $\varepsilon\le 2^{-16}$
\\\hline
\strut\rule{0pt}{13pt}%
&  64&&3.70e-6	&1.63e-5	&1.63e-5	&&8.31e-5	&2.17e-3	&2.18e-3	\\	
&& &2.00	&2.00	&2.00	&&1.99	&\bf1.03	&\bf1.01	\\
& 128&&9.26e-7	&4.06e-6	&4.06e-6	&&2.09e-5	&1.06e-3	&1.09e-3	\\	
&&&2.00	&2.00	&2.00	&&2.00	&\bf1.06	&\bf1.00	\\
& 256&&2.31e-7	&1.02e-6	&1.02e-6	&&5.23e-6	&5.09e-4	&5.42e-4	\\	
&&&2.00	&2.00	&2.00	&&2.00	&\bf1.21	&\bf1.00	\\	
& 512&&5.79e-8	&2.54e-7	&2.54e-7	&&1.31e-6	&2.21e-4	&2.71e-4	\\	
[-3.3cm]
\rotatebox{90}{\rule{0pt}{9pt}~~~~~$\,M=\frac14 N\,$~~~~~}\\
\hline
\strut\rule{0pt}{13pt}%
%
&  64&&3.70e-6	&1.63e-5	&1.63e-5	&&8.31e-5	&2.17e-3	&2.18e-3	\\	
&&&2.00	&2.00	&2.00	&&\bf0.92	&\bf1.01	&\bf1.01	\\
& 128&&9.25e-7	&4.06e-6	&4.06e-6	&&4.38e-5	&1.08e-3	&1.09e-3	\\	
&&&2.00	&2.00	&2.00	&&\bf0.98	&\bf1.00	&\bf1.00	\\
& 256&&2.31e-7	&1.02e-6	&1.02e-6	&&2.23e-5	&5.39e-4	&5.42e-4	\\	
&&&2.00	&2.00	&2.00	&&\bf0.99	&\bf1.00	&\bf1.00	\\	
& 512&&5.78e-8	&2.54e-7	&2.54e-7	&&1.12e-5	&2.69e-4	&2.71e-4	\\		
[-3.3cm]
\rotatebox{90}{\rule{0pt}{9pt}~~~~~~$\,M=16\,$~~~~~~}\\
\hline
\end{tabular}
\end{center}
\end{table}
}

\subsection{Laplace equation. Anisotropic diffusion}\label{ssec_Lapl}
We shall now consider the Laplace equation $-\triangle u = f(x,y)$ with the same exact solution $u=e^{-x/\eps}$
subject to the Dirichlet boundary conditions.
This problem will be considered in the domain $(0,2\eps)\times(0,1)$. (When the Bakhvalov mesh is used in $(0,1)^2$, one gets similar numerical results.)

The maximum nodal errors of the lumped-mass linear finite element method applied  to this equation are presented in Table~\ref{Laplace_table}.
Similarly to the case of equation~\eqref{spp},
we again observe that when the triangulations of type~A are used, the convergence rates are consistent with the linear interpolation error, while
on the corresponding triangulations of type~C, the convergence rates deteriorate from $r+1=2$ to $1$ for small values of $\eps$.

Note that the Laplace equation  in the domain $(0,2\eps)\times(0,1)$ can be rewritten as the anisotropic diffusion equation $- \hat u_{xx}-\eps^2 \hat u_{yy} = \hat f(x,y)$
posed in the domain $(0,2)\times (0,1)$ with the exact solution $\hat u(x)=u(\eps x) = e^{-x}$.
Note also that the considered anisotropic uniform triangulations in $(0,2\eps)\times(0,1)$ correspond to quasi-uniform triangulations in $(0,2)\times (0,1)$.
Consequently, our observations for the Laplace equation immediately imply that
linear finite elements may be only first-order pointwise accurate when
 applied to an anisotropic diffusion equation on quasi-uniform triangulations.

A theoretical justification of only first-order accuracy on the considered triangulations of type~C, as well as certain more general locally anisotropic triangulations, will be given in \S\ref{ssec_theory_Lapl}.
\smallskip

\noindent
{\color{blue}{\it Hessian-metric-uniform mesh}.
Furthermore, consider the mesh $\{x_i\}_{i=0}^N$ on $[0,2\eps]$ defined by
$x_i:=x([1-e^{-1}]\frac{i}{N})$, where $x(t):=-2\eps\ln(1-t)$.
Imitating the evaluations in Remark~\ref{rem_Bakh}, one concludes that
$|u''(x(t))|^{1/2}\,x'(t)=2$, so $\{x_i\}$ is uniform under the 1d Hessian metric.
\smallskip

Numerical results for the latter mesh with $M=\frac14 N$ and $M=16$ are given in Table~\ref{Laplace_table_equi}.
In the latter case, the convergence rates are similar to those in Table~\ref{Laplace_table}. However, when $M=16$ is fixed,
which does not affect the interpolation errors, but makes the mesh more anisotropic,
on triangulations of type C
we observe convergence rates close to $1$ even for $\eps=1$.
}

\begin{figure}[b!]
~\hfill\mbox{\hspace*{-0.0cm}\includegraphics[width=0.9\textwidth]{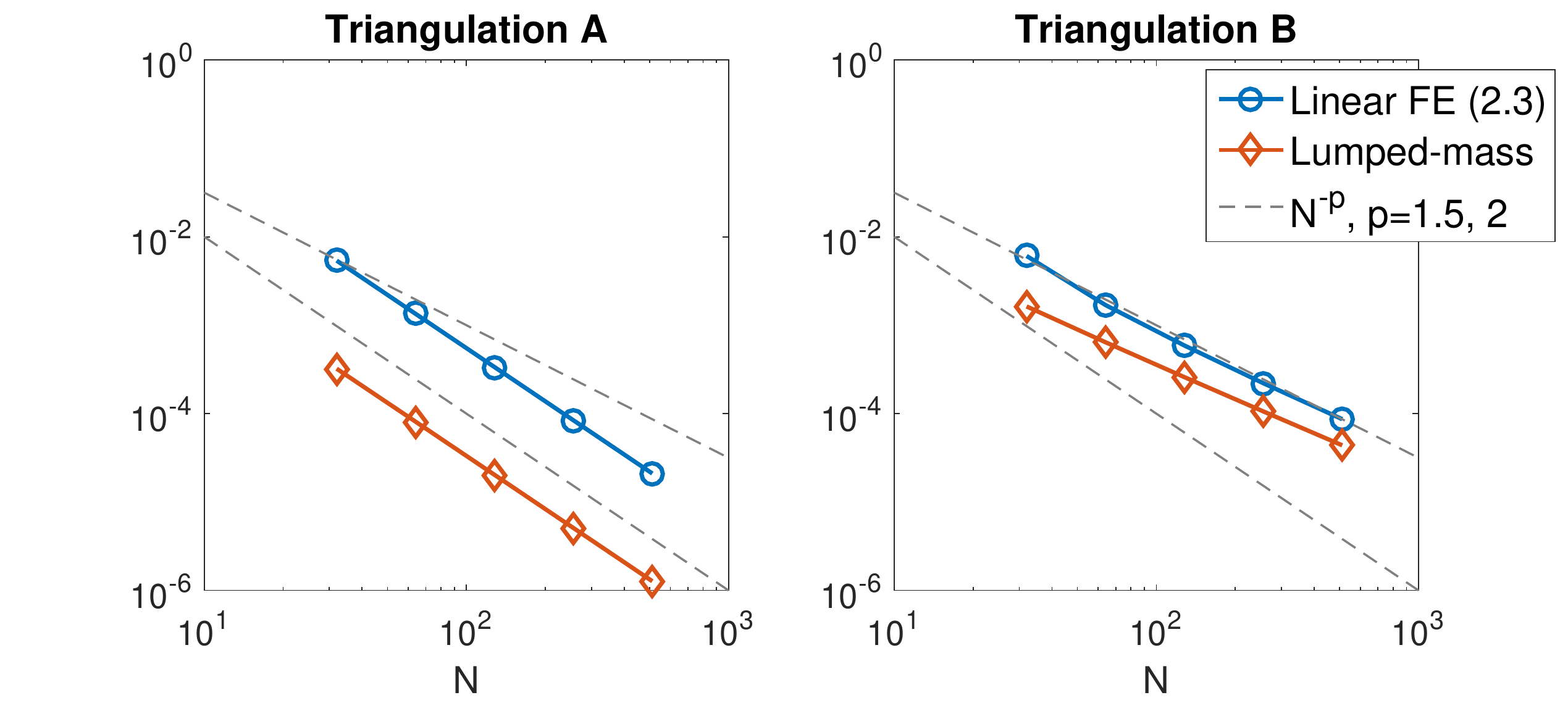}}\hfill~%
\vspace{-0.3cm}
 \caption{\label{fig_singular}
Singular equation \eqref{sing}: maximum nodal errors  for linear finite elements on graded anisotropic triangulations A and B.}
 \end{figure}

\subsection{Singular equations on graded anisotropic meshes}\label{ssec_singular}
{\color{blue}Singular equations also exhibit layer solutions, so
it is quite appropriate to employ anisotropic meshes in their numerical solution.
}
Consider a  version of the test problem from \cite{NSSV06}
\beq\label{sing}
-\triangle u+ f(u)=0,\qquad f(u):=-{\textstyle \frac14}u^{-3},
\eeq
posed in the square domain $(0,1)^2$, with the exact solution $u=x^{1/2}$, subject to the appropriate Dirichlet boundary conditions.
Following \cite{NSSV06}, $f$ will be replaced in our computations by $\tilde f(u):=-{\textstyle \frac14}\max\{u,\mu\}^{-3}$ with some small positive $\mu$.
Note (see, e.g.,\cite[(5.1)]{NSSV06}) that the corresponding exact solution $\tilde u$ satisfies $0\le u-\tilde u\le \mu$.
{\color{blue}(Note also that there exist unique $u$ and $\tilde u$ in $C(\bar\Omega)$ by \cite[Lemma~1]{DK14}; see also references in \cite[\S2]{NSSV06}.)}

For this problem we shall consider the linear finite element method in the form
\beq
( \nabla u_h ,  \nabla \chi) + (\tilde f(u_h)^I, \chi ) = 0\qquad \forall\,\chi \in S_h,
\label{Noch_fem}
\eeq
where $\tilde f(u_h)^I$ is the standard piecewise-linear Lagrange interpolant of $\tilde f(u_h)$.
The lumped-mass version of this method will also be considered, which is obtained from \eqref{Noch_fem}
by replacing $(\tilde f(u_h)^I, \chi )$ with $\int_{\Omega} \bigl(\tilde f(u_h)\chi\bigr)^I$.
The discrete nonlinear problems are solved using the damped Newton method.

To satisfy $|u-u^I|\le C N^{-2}$, the graded mesh $x_i=(i/N)^4$ is employed. 
We set $\mu=N^{-2}$ in all our computations.
Note that if $\mu$ is chosen too small, it appears that the errors for the method \eqref{Noch_fem}
become dominated by ${\mathcal O}(h_1^2/\mu^3)$, where $h_1=x_1-x_0$, while our choice $\mu=N^{-2}$ does not affect the rates of convergence.
Note also that for the lumped-mass version there are no such restrictions on $\mu$, and one can use, for example $\color{blue}\mu={\mathcal O}(N^{-3})$.

Triangulations of types A and B were considered, with the maximal nodal errors shown in Fig.\,\ref{fig_singular}.
We again observe that the computational rates of convergence for the triangulations of type A are consistent with the linear interpolation errors, and thus with
\eqref{ScjWal}, while once we switch to the triangulations of type~B, the convergence rates become lower that $3/2$.
In other words,
the convergence rates on the considered graded anisotropic meshes depend not only on the linear interpolation error, but also, and very considerably, on the mesh topology.

\begin{remark}[Convergence in the $L_2$ norm]\color{blue}
Unsurprisingly, the computational  rates of convergence on triangulations of type B in the $L_2$ norm are closer to the optimal~$2$. To be more precise, they are close to $1.8$
in the lumped-mass case, and to $1.9$ when no quadrature is used (while for triangulations of type A we observed convergence rates close to $2$, similarly to those in the maximum norm).
\end{remark}

\section{Linear finite elements: effects of lumped-mass quadrature}\label{sec_quadr}

   \begin{figure}[b!]
\mbox{\hspace*{-1cm}\includegraphics[width=1.05\textwidth]{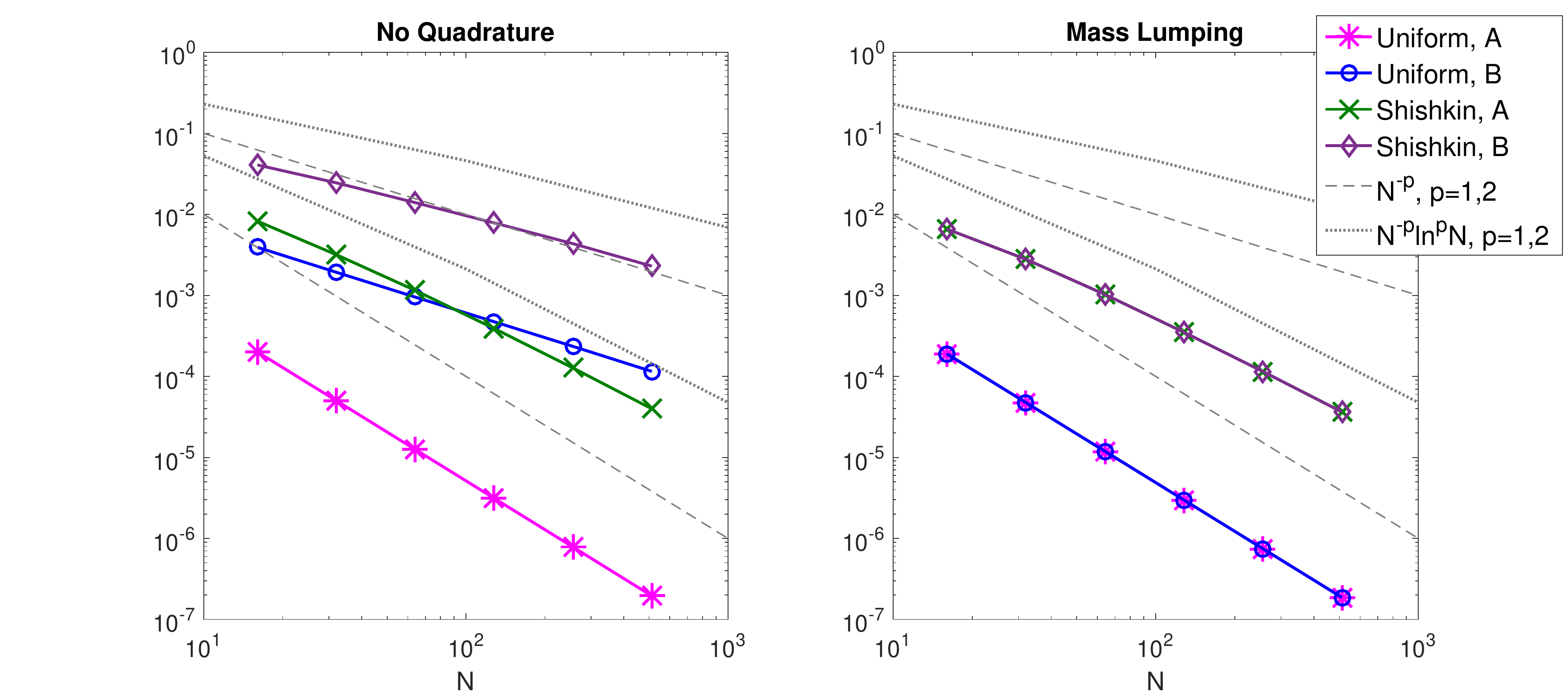}}%
\vspace{-0.3cm}
 \caption{\label{fig_quadr}
Maximum nodal errors of linear finite elements for equation~\eqref{spp} on triangulations A and B for $\eps=10^{-3}$: uniform mesh $(0,2\eps)\times(0,1)$ and Shishskin mesh in $(0,1)^2$.}
 \end{figure}

In this section,
we are interested in the
the effects of the lumped-mass quadrature on the accuracy of linear finite elements.
It will be shown that the lumped-mass quadrature may in certain situations (although not always, as was demonstrated in~\S\ref{sec_linear}) improve the orders of convergence from $1$ to~$2$.

As a test problem we again use the singularly perturbed equation \eqref{spp} with the exact solution $u=e^{-x/\eps}$
in the rectangular domains $(0,2\eps)\times(0,1)$ and $(0,1)^2$.
Triangulations of types A and B of Fig.\,\ref{fig_ABC} will be considered, with the underlying grid being uniform in the domain $(0,2\eps)\times(0,1)$,
and the tensor-product of a Shishkin-type grid in the $x$-direction and the uniform grid in the $y$-direction  in the domain $(0,1)^2$.
Piecewise-uniform layer-adapted Shishkin grids are frequently employed in the numerical solution of equations

{
\begin{table}[t!]
\begin{center}
\caption{\label{lin_quadrature_UNItable}Linear elements for equation~\eqref{spp} on the uniform  mesh in $(0,2\eps)\times(0,1)$,
$M=\frac14 N$:
maximum nodal errors and
computational rates $p$ in $N^{-p}$
}
\vspace{0.3cm}
\begin{tabular}{rcrrrcrrr}
\hline\strut
&&\multicolumn{3}{l}{\rule{0pt}{12pt}Triangulation B}
\\
&&\multicolumn{3}{l}{\rule{0pt}{12pt}No quadrature}
&&\multicolumn{3}{l}{\rule{0pt}{12pt}Mass lumping}
\\
$ N$&&$\varepsilon=1$&
$\varepsilon=2^{-8}$& $\varepsilon\le 2^{-16}$
&&$\varepsilon=1$&
$\varepsilon=2^{-8}$& $\varepsilon\le 2^{-16}$
\\\hline
\strut\rule{0pt}{13pt}%
  32&&4.44e-5	&1.93e-3	&1.93e-3	&&	1.65e-5	&4.77e-5	&4.77e-5	\\	
 &&	2.74	&\bf 1.02	&\bf 1.02 &&	1.99	&2.00	&2.00\\
  64&&6.65e-6	&9.49e-4	&9.54e-4	&&	4.14e-6	&1.19e-5	&1.19e-5	\\	
&&	2.45	&\bf 1.03	&\bf 1.01 &&	2.00	&2.00	&2.00\\	
 128&&1.22e-6	&4.64e-4	&4.74e-4	&&	1.04e-6	&2.98e-6	&2.98e-6	\\
&&2.12	&\bf 1.09	&\bf 1.00 &&	2.00	&2.00	&2.00\\ 	
 256&&2.79e-7	&2.17e-4	&2.36e-4	&&	2.59e-7	&7.46e-7	&7.46e-7	\\
%
%
\hline
\end{tabular}
\end{center}
\end{table}
}

{
\begin{table}[h!]
\begin{center}
\caption{\label{lin_quadrature_Shi_table}Linear elements for equation~\eqref{spp} on the Shishkin tensor-product mesh in $(0,1)^2$  with $M=\frac14 N$:
maximum nodal errors and
computational rates $p$ in $(N^{-1}\ln N)^p$
}
\vspace{0.3cm}
\begin{tabular}{rcrrrcrrr}
\hline\strut
&&\multicolumn{3}{l}{\rule{0pt}{12pt}Triangulation B}
\\
&&\multicolumn{3}{l}{\rule{0pt}{12pt}No quadrature}
&&\multicolumn{3}{l}{\rule{0pt}{12pt}Mass lumping}
\\
$ N$&&$\varepsilon=1$&
$\varepsilon=2^{-8}$& $\varepsilon\le 2^{-16}$
&&$\varepsilon=1$&
$\varepsilon=2^{-8}$& $\varepsilon\le 2^{-16}$
\\\hline
\strut\rule{0pt}{13pt}%
  32&&2.38e-5	&2.45e-2	&2.46e-2	&&3.55e-6	&2.80e-3	&2.80e-3	\\	
 &&	3.74	&\bf1.12	&\bf1.10	&&2.70	&1.96	&1.96\\
  64&&3.52e-6	&1.38e-2	&1.40e-2	&&8.94e-7	&1.03e-3	&1.03e-3	\\	
&&	3.65	&\bf1.15	&\bf1.06	&&2.57	&1.99	&1.99\\	
 128&&4.92e-7	&7.41e-3	&7.90e-3	&&2.24e-7	&3.51e-4	&3.51e-4	\\
&&3.43	&\bf1.32	&\bf1.03	&&2.48	&2.00	&2.00\\ 	
 256&&7.20e-8	&3.54e-3	&4.43e-3	&&5.60e-8	&1.15e-4	&1.15e-4	\\
%
%
\hline
\end{tabular}
\end{center}
\end{table}
}

\noindent of type~\eqref{spp}
\cite{Shi,Ko_OR}.
The Shishkin grid that we consider has the transition parameter
$\sigma = \min\bigl\{2\eps\ln N,\, \frac12\bigr\}$ and
equal numbers of grid points on $(0,\sigma)$ and $(\sigma,1)$,
so a calculation shows that the interpolation error $|u-u^I|\le C N^{-2}\ln^2 N$.

The maximum nodal errors are given in Table~\ref{lin_quadrature_UNItable} for the uniform mesh and in Table~\ref{lin_quadrature_Shi_table} for the Shishkin mesh;
for $\eps=10^{-3}$ see also Fig.\,\ref{fig_quadr}.
We observe that while the triangulations of type~A are used, whether no quadrature or the mass-lumped quadrature is employed, the errors are consistent with
the linear interpolation errors, and thus with~\eqref{ScjWal}.
Once we switch to the triangulations of type B, we observe a similar convergence behaviour for the lumped-mass quadrature version, however
the rates of convergence deteriorate to $1$ (with the logarithmic factor in the case of the Shishkin mesh) if no quadrature is used.
Note that here the situation is entirely different from what was observed in \S\ref{ssec_lin_spp}, where all numerical results were qualitatively independent of the quadrature.

A theoretical justification of lower-order accuracy on the considered triangulations of type~B, as well as certain more general locally anisotropic triangulations, will be given in
 \S\ref{ssec_theory_quad}.

\section{Higher-order elements on anisotropic triangluations}\label{sec_higher}

{
\begin{table}[t!]
\begin{center}
\caption{\label{quadr_Bakh_table}Quadratic elements for equation~\eqref{spp} on the Bakhvalov tensor-product mesh in $(0,1)^2$, 
$M=\frac14 N$:
maximum nodal errors and
computational rates $p$ in $N^{-p}$
}
\vspace{0.3cm}
\begin{tabular}{rcrrrcrrr}
\hline\strut
&&\multicolumn{3}{l}{\rule{0pt}{12pt}Triangulation A}
&&\multicolumn{3}{l}{\rule{0pt}{12pt}Triangulation C}
\\
$ N$&&$\varepsilon=1$&
$\varepsilon=2^{-8}$& $\varepsilon\le 2^{-16}$
&&$\varepsilon=1$&
$\varepsilon=2^{-8}$& $\varepsilon\le 2^{-16}$
\\\hline
\strut\rule{0pt}{13pt}%
%
%
  32&& 1.12e-8	&2.80e-4	&2.88e-4 &&2.96e-7	&3.35e-4	&3.42e-4	\\
&&3.84	&\bf2.10	&\bf2.00&&3.00	&\bf2.20	&\bf2.11	\\	
  64&& 7.82e-10	&6.56e-5	&7.19e-5 &&3.69e-8	&7.31e-5	&7.92e-5	\\
&&3.89	&\bf2.29	&\bf2.00&&3.00	&\bf2.34	&\bf2.07	\\  	
 128&& 5.28e-11	&1.34e-5	&1.80e-5 &&4.62e-9	&1.45e-5	&1.89e-5	\\
&&3.92	&2.67	&\bf2.00&&3.00	&2.68	    &\bf2.04	\\ 	
 256&& 3.48e-12	&2.11e-6	&4.49e-6&&5.77e-10	&2.26e-6	&4.60e-6	\\	
\hline
\end{tabular}
\end{center}
\end{table}
}

{
\begin{table}[t!]
\begin{center}
\caption{\label{quadr_UNItable}Quadratic elements for equation~\eqref{spp} on the uniform  mesh in $(0,2\eps)\times(0,1)$,
$M=\frac14 N$:
maximum nodal errors and
computational rates $p$ in $N^{-p}$
}
\vspace{0.3cm}
\begin{tabular}{rcrrrcrrr}
\hline\strut
&&\multicolumn{3}{l}{\rule{0pt}{12pt}Triangulation A}
&&\multicolumn{3}{l}{\rule{0pt}{12pt}Triangulation C}
\\
$ N$&&$\varepsilon=1$&
$\varepsilon=2^{-8}$& $\varepsilon\le 2^{-16}$
&&$\varepsilon=1$&
$\varepsilon=2^{-8}$& $\varepsilon\le 2^{-16}$
\\\hline
\strut\rule{0pt}{13pt}%
  32&&4.59e-8	&1.14e-5	&1.15e-5	   && 1.85e-6	&1.27e-5	&1.28e-5	\\	
 &&3.84	&\bf2.03	&\bf2.00	   &&2.99	&\bf2.11	&\bf2.07	\\
  64&&3.20e-9	&2.79e-6	&2.88e-6	   && 2.33e-7	&2.94e-6	&3.03e-6	\\	
&&3.89	&\bf2.13	&\bf2.00	   &&2.99	&\bf2.16	&\bf2.04	\\	
 128&&2.15e-10	&6.37e-7	&7.19e-7	   && 2.92e-8	&6.57e-7	&7.39e-7	\\
&&3.93	&\bf2.42	&\bf2.00	   &&3.00	&\bf2.43	&\bf2.02	\\ 	
 256&&1.42e-11	&1.19e-7	&1.80e-7	   && 3.66e-9	&1.22e-7	&1.82e-7	\\
%
%
\hline
\end{tabular}
\end{center}
\end{table}
}

Next, we shall numerically investigate quadratic and cubic Lagrange finite elements applied to
the singularly perturbed equation~\eqref{spp} with the  exact solution $u=e^{-x/\eps}$,
posed in the domains $(0,1)^2$ and $(0,2\eps)\times(0,1)$.
In these domains we
 respectively consider the Bakhvalov and the uniform rectangular grids  described in \S\ref{sec_linear}.
This ensures  that the interpolation error is ${\mathcal O}(N^{r+1})$, while the maximum nodal errors are
shown in Fig.\,\ref{fig_higher_order}, see $r=2,\,3$, and presented in Tables~\ref{quadr_Bakh_table} and~\ref{quadr_UNItable}
for $r=2$, and in Table~\ref{cubic_UNItable} for $r=3$.

When $\eps=1$, the triangulations remain quasi-uniform, and we observe a textbook behaviour consistent with \eqref{ScjWal}, with the convergence rates close $r+1$.
The only exception is the case of  quadratic elements on triangulation~A, when the nodal superconvergence rates become close to $r+2=4$.
However, as $\eps$ takes considerably smaller values, and the triangulations accordingly  become anisotropic,
the convergence behaviour changes quite dramatically, with
the convergence rates deteriorating to only $r$.
Note that in contrast to linear elements, the mesh topology
(i.e. whether triangulations of type A or C are used)
does not seem to affect the rates of convergence, which consistently remain lower than one may conjecture
motivated by~\eqref{ScjWal}.


{
\begin{table}[t!]
\begin{center}
\caption{\label{cubic_UNItable}Cubic elements for equation~\eqref{spp} on the uniform  mesh in $(0,2\eps)\times(0,1)$,
$M=\frac14 N$:
maximum nodal errors and
computational rates $p$ in $N^{-p}$
}
\vspace{0.3cm}
\begin{tabular}{rcrrrr}
\hline\strut
&&\multicolumn{4}{l}{\rule{0pt}{12pt}Triangulation A}
\\
$ N$&&$\varepsilon=1$&
$\varepsilon=2^{-8}$& $\varepsilon=2^{-16}$&$\varepsilon=2^{-24}$\\\hline
\strut\rule{0pt}{13pt}%
  16&& 9.68e-08 &  1.89e-07 &  1.89e-07  & 1.89e-07\\
&& 3.87 & \bf3.23 & \bf3.22 & \bf3.22\\
  32&& 6.60e-09 &  2.01e-08 &  2.02e-08  & 2.02e-08\\
&& 3.94 & \bf3.15 & \bf3.12 & \bf3.12\\
  64&& 4.31e-10 &  2.25e-09 &  2.33e-09  & 2.33e-09\\
&& 4.01 & \bf3.25 & \bf3.07 & \bf3.06\\
 128&& 2.68e-11 &  2.37e-10 &  2.77e-10  & 2.80e-10\\
%
%
\hline
\end{tabular}
\end{center}
\end{table}
}

\section{Theoretical justification}\label{sec_theory}

In this section, we give a theoretical justification for some of
the numerical phenomena presented in the previous sections.
For this purpose, it is convenient to look at the considered finite element methods as certain finite difference schemes
on the underlying rectangular
 tensor-product meshes.

     \begin{figure}[t!]
\hspace{0.5cm}\includegraphics[height=0.35\textwidth,angle=180]{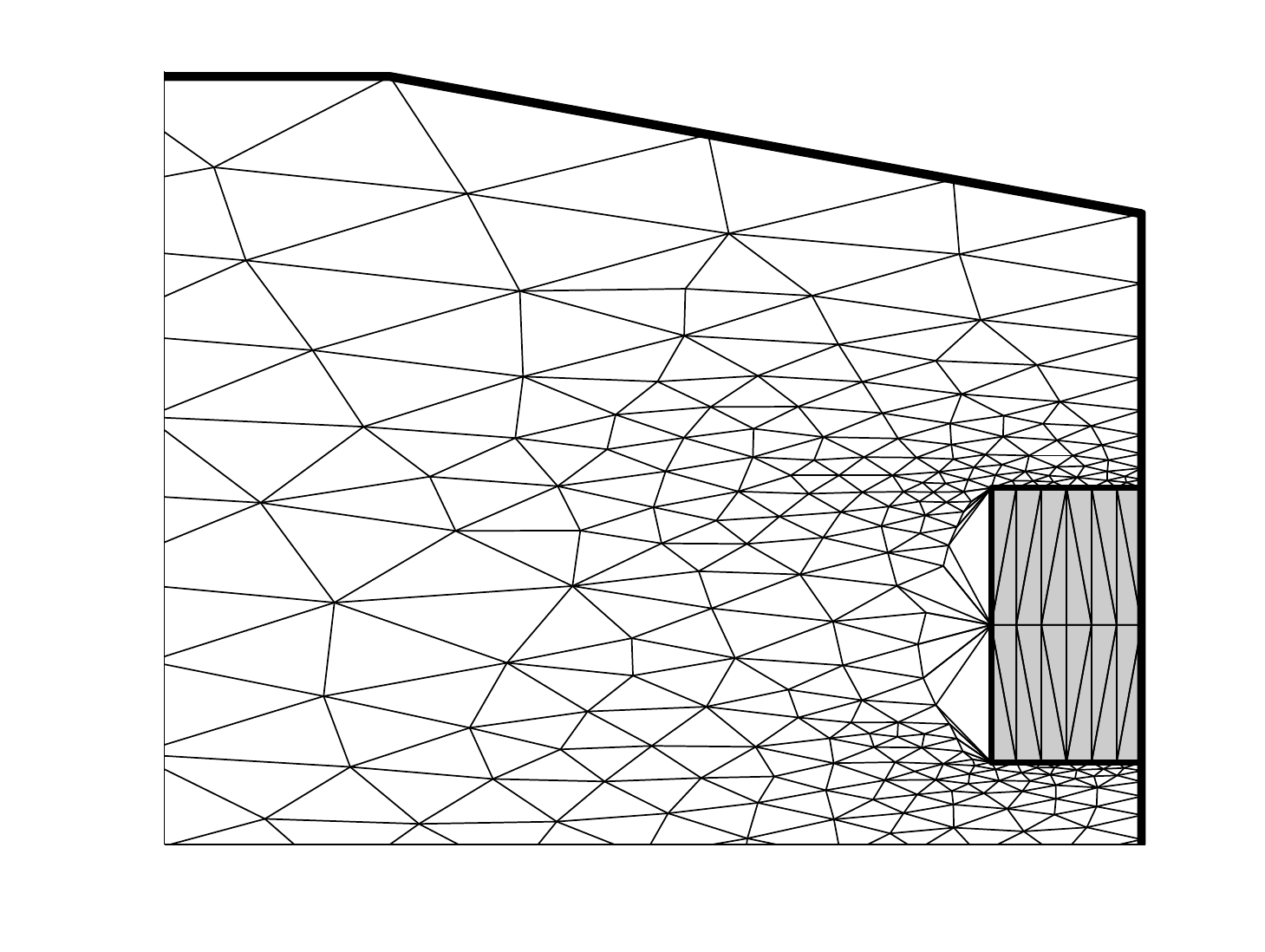}
\hfill{}\vspace{-0.32\textwidth}

\hfill\includegraphics[height=0.31\textwidth]{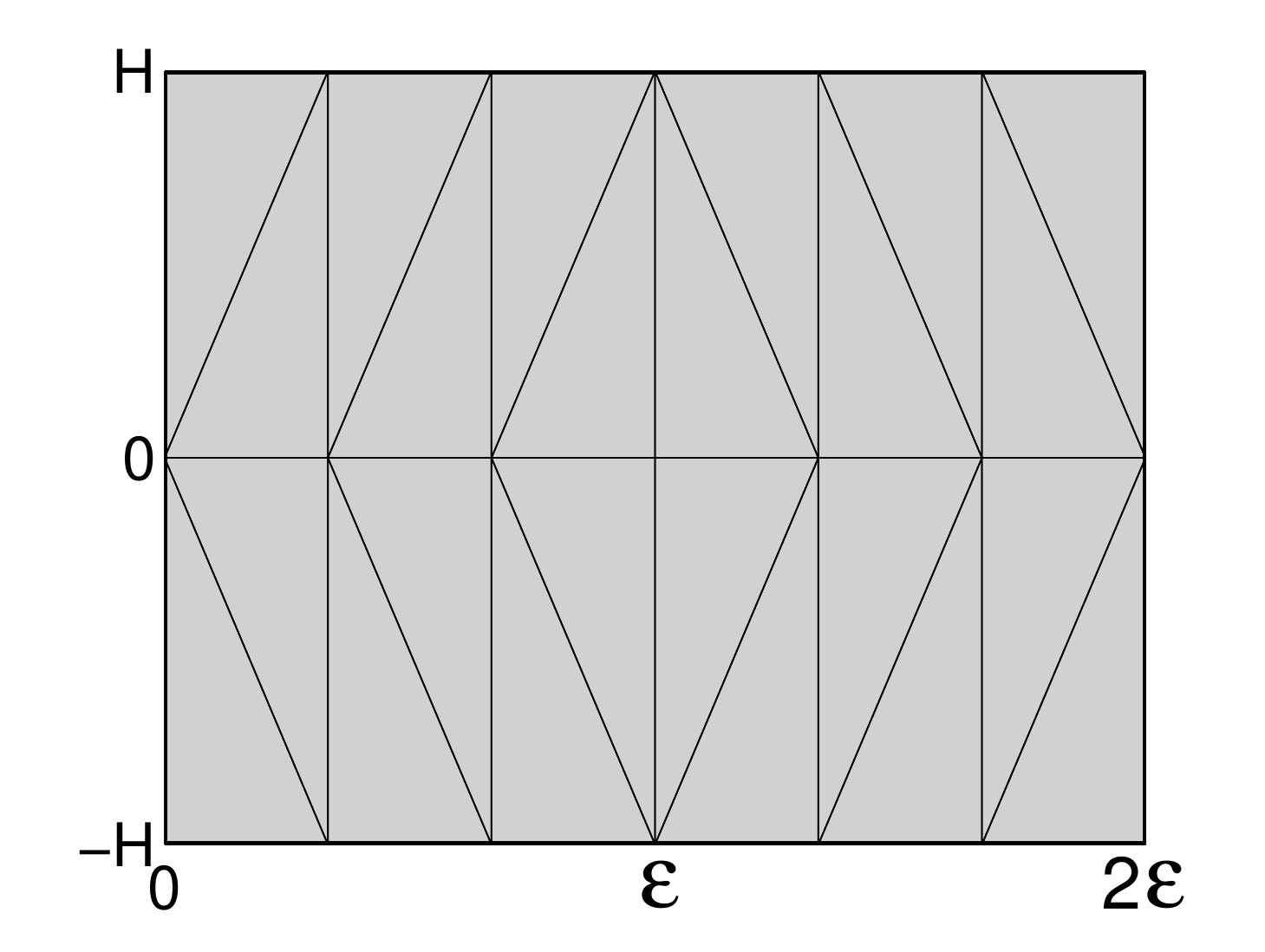}\hspace{0.5cm}\vspace{-0.2cm}
 \caption{\label{fig_Omega0}
Triangulation $\mathcal T$ in $\Omega$ (left); anisotropic subtriangulation $\mathring{\mathcal T}\subset\mathcal T$
of type C
is enlarged and stretched in the $x$-direction (right).}
 \end{figure}

 \subsection{Linear elements for singularly perturbed equation~\eqref{spp}}

The theoretical justification of the numerical phenomena of \S\ref{ssec_lin_spp} is addressed in \cite{Kopt_mc14}.
Here we briefly describe the  setting and the main results of  \cite[\S3]{Kopt_mc14},  as they will be useful in the forthcoming analysis.

Suppose $\Omega\supset\mathring{\Omega}$, where
 the subdomain $\mathring{\Omega}$ and the tensor-product grid $\mathring{\omega}_h$ in this subdomain are
 defined by
\beq\label{Omega_0}
 \mathring{\Omega}:=(0,2\eps)\times(-H,H),\quad
 \mathring{\omega}_h:=\{x_i=hi\}_{i=0}^{2N_0}\times\{-H,0,H\},\quad h={\ts\frac{\eps}{N_0}}.
\eeq
The triangulation $\mathring{\mathcal T}$ of type C  in $\mathring{\Omega}$ obtained from the rectangular grid $\mathring{\omega}_h$
is shown in Fig.\,\ref{fig_Omega0} (compare with  Fig.\,\ref{fig_ABC}, right).

 Next, let $U$
denote the linear finite element solution obtained on some triangulation ${\mathcal T}\supset\mathring{\mathcal T}$
 in the global domain $\Omega$, while
 for its nodal values in $\mathring\Omega$ we shall use the notation
 $$
 U_i:=U(x_i,0),\qquad U_i^{\pm}:=U(x_i,\pm H).
 $$
A calculation yields
 a finite difference representation
 in the {\it lumped-mass} case:%
 \begin{subequations}\label{fd_lumpted}
\beq\label{fd_lumpted_a}
\LL^h_{\rm l.m.} U(x_i,0):={\ts\frac{\eps^2}{h^2}}[-U_{i-1}+2U_i-U_{i+1}]
+{\ts\frac{\eps^2}{H^2}}[-U^-_{i}+2U_i-U_{i}^+]
+\gamma_i U_{i}=0\!\!
\eeq
 for $i=1,\ldots, 2N_0-1$, where\vspace{-0.2cm}
 \beq\label{fd_lumpted_b}
\gamma_i=1\quad\mbox{for}\;i\neq N_0,\qquad\quad  \gamma_{N_0}={\ts\frac23}.
 \eeq
 \end{subequations}
  If {\it no quadrature} is used, 
  for $i=1,\ldots, 2N_0-1$ one gets
\beq\label{fd_a}
\LL^h U(x_i,0):=\LL^h_{\rm l.m.} U(x_i,0)-{\ts \frac1{12}}\!\!\!\sum_{(x',y')\in {\mathcal S}_{i}}
\!\!\![U_{i}-U(x',y')]=0,
\vspace{-0.3cm}
\eeq
 where ${\mathcal S}_{i}$ denotes the set of mesh nodes that share an edge with $(x_i,0)$.%

 Note that if we had $\gamma_i= 1$ $\forall\, i$ , then  (\ref{fd_lumpted_a}) would become the standard 
 finite difference scheme for
  (\ref{spp}). 
 The difference in the value of $\gamma_i$ at
 a single point $(x_{N_0},0)$, 
where the truncation error is ${\mathcal O}(1)$,
 results in the deterioration of pointwise accuracy
 to ${\mathcal O}(N_0^{-1})$, as described in following lemma.

 \begin{lemma}[{\cite{Kopt_mc14}}]\label{lem_LM}
 Let $u=e^{-x/\eps}$ be the exact solution of 
 \eqref{spp} posed
 in  $\Omega\supset\mathring{\Omega}$,
 a triangulation $\mathcal T$ in $\Omega$
 include a
  subtriangulation $\mathring{\mathcal T}$ of type C in  $\mathring\Omega$
  subject to \eqref{Omega_0},
 and $U$ be the finite element solution on $\mathcal T$ obtained using  linear elements  without quadrature or with the lumped-mass quadrature.
For any positive constant $C_2$,
there exist sufficiently small constants $C_0$ and  $C_1$  such that
  if 
  $N_0^{-1}\le C_1$
  and
  $\eps\le C_2 H$,
  then\vspace{-0.2cm}
\beq\label{Uu_LM_bound}
 \max_{\bar\Omega}|U-u|  \ge 
C_0 N_0^{-1}.
\eeq
 \end{lemma}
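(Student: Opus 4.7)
The plan is to exploit the isolated $\mathcal{O}(1)$ inconsistency of the scheme (\ref{fd_lumpted}), visible in the coefficient $\gamma_{N_0}=\tfrac23$ of (\ref{fd_lumpted_b}), at the distinguished node $(x_{N_0},0)$. Setting $e_i:=U_i-u(x_i,0)$ and subtracting $\LL^h_{\rm l.m.}u(x_i,0)$ (or $\LL^h u(x_i,0)$) from the scheme, the nodal error satisfies the same difference equation with right-hand side $\tau_i:=-\LL^h_{\rm l.m.}u(x_i,0)$. Since $u=e^{-x/\eps}$ is $y$-independent, the transverse second difference in (\ref{fd_lumpted_a}) vanishes identically, and a Taylor expansion combined with $u''=\eps^{-2}u$ yields, on the uniform mesh of size $h=\eps/N_0$,
$$
\tau_i=\tfrac{1}{12N_0^2}\,u(x_i)+\mathcal{O}(N_0^{-4})\quad\text{for }i\ne N_0,\qquad \tau_{N_0}=\tfrac13\,e^{-1}+\mathcal{O}(N_0^{-2}).
$$
The no-quadrature variant (\ref{fd_a}) is handled in the same spirit: the extra edge-sum terms again produce smooth Taylor remainders at the regular nodes while leaving an $\mathcal{O}(1)$ defect at the single node $i=N_0$.

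The second step localises the analysis to $\mathring\omega_h$ by splitting $e=v+w$, where $v$ solves the discrete scheme in the interior of $\mathring\omega_h$ with right-hand side $\tau_i$ and zero Dirichlet data on $\p\mathring\omega_h$, while $w$ carries the actual boundary trace of $e$. Because the $y$-variable in $\mathring\omega_h$ takes only the three values $\{-H,0,H\}$ and $v$ vanishes at $y=\pm H$, diagonalising in $y$ reduces the evaluation of $v$ along $y=0$ to the one-dimensional problem
$$
-\tfrac{\eps^2}{h^2}\delta_x^2 V_i+\mu\,V_i=\tau_i,\qquad V_0=V_{2N_0}=0,
$$
with $\mu=1+2\eps^2/H^2\in[1,\,1+2C_2^2]$ by the hypothesis $\eps\le C_2H$. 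The associated discrete Green function is explicit, of the form $G(x_i;x_j)=c\,\rho^{|i-j|}$ up to boundary corrections, with $\rho=1-\sqrt\mu\,h/\eps+\mathcal{O}(h^2/\eps^2)$ and amplitude $c\asymp h/(2\eps\sqrt\mu)=\mathcal{O}(N_0^{-1})$. The concentrated source $\tau_{N_0}$ therefore produces $|v(x_{N_0},0)|\ge C'N_0^{-1}$, while the contribution of the smooth residuals $\tau_i$, $i\ne N_0$, is bounded by $\|G\|_1\max_{i\ne N_0}|\tau_i|=\mathcal{O}(1)\cdot\mathcal{O}(N_0^{-2})$, a strictly lower-order perturbation.

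The main obstacle will be controlling the lifted piece $w$, whose trace on $\p\mathring\omega_h$ is inherited from the global problem on $\mathcal{T}$ and could in principle cancel $v$. I would close the argument by contradiction: assume $\max_{\bar\Omega}|U-u|<C_0N_0^{-1}$; then $|e|<C_0N_0^{-1}$ on $\p\mathring\omega_h$, and by a discrete maximum principle for the lumped-mass scheme (and an analogous energy estimate for the no-quadrature case) together with the exponential decay factor $\rho^{|i|}$, the harmonic-type lift satisfies $|w(x_{N_0},0)|\le C''C_0N_0^{-1}$. Here the hypothesis $\eps\le C_2H$ is essential, as it keeps $\mu$ bounded above and $\rho$ bounded away from $1$, preventing any amplification of boundary data into the interior. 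Choosing $C_0$ small enough that $C''C_0<\tfrac12 C'$ then forces $|e(x_{N_0},0)|\ge|v(x_{N_0},0)|-|w(x_{N_0},0)|\ge\tfrac12 C'N_0^{-1}>C_0N_0^{-1}$, a contradiction which yields (\ref{Uu_LM_bound}) for both quadrature variants.
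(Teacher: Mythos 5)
Your proposal is correct and follows essentially the same route as the paper's (i.e.\ the argument of \cite{Kopt_mc14}, reproduced in adapted form in the proofs of Lemmas~\ref{lem_lapl} and~\ref{lem_quadr}): the finite-difference representation \eqref{fd_lumpted}--\eqref{fd_a} with an ${\mathcal O}(1)$ defect at the single node $(x_{N_0},0)$, reduction to a one-dimensional operator on the row $y=0$, a discrete Green's-function/barrier lower bound of order $N_0^{-1}$, and discrete-maximum-principle control of the influence of the unknown data on $\pt\mathring{\Omega}$. Your splitting $e=v+w$ together with the concluding contradiction step is just a repackaging of the paper's decomposition via the auxiliary solution $\mathring{U}$ (with boundary values $u^I$) and $Z=U-\mathring{U}$, followed by the ``maximum exceeds the average'' observation, so the two arguments are essentially identical.
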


Note that Lemma~\ref{lem_LM} applies to the two triangulations of type C considered in \S\ref{ssec_lin_spp}
 with $N_0=N/2$ for the uniform grid  in $(0,2\eps)\times(0,1)$, and $N_0={\mathcal O}(N)$,
 for the Bakhvalov grid in $(0,1)^2$.

 \subsection{Lumped-mass linear elements for the Laplace equation}\label{ssec_theory_Lapl}

Next, we address the numerical results of \S\ref{ssec_Lapl} for the Laplace equation.
The following lemma directly applies to the uniform anisotropic triangulation of type~C in the domain $(0,2\eps)\times(0,1)$ with $N_0=N/2$.
{\color{blue}Note that a version of this lemma can also be proved for the smooth mesh used in Table~\ref{Laplace_table_equi}, but the proof would be lengthier and more intricate.}

\begin{lemma}[Laplace equation]\label{lem_lapl}
Lemma~\ref{lem_LM} remains valid if the lumped-mass linear finite element method is applied to the Laplace equation with the exact solution $u=e^{-x/\eps}$.
\end{lemma}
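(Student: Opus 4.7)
Plan. I would adapt the finite-difference strategy of Lemma~\ref{lem_LM}, proceeding in three steps.

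Step 1 (FD representation). The stiffness contributions at each interior node $(x_i,0)$, $i=1,\ldots,2N_0-1$, within $\mathring{\mathcal T}$ give exactly $hH$ times the standard anisotropic 5-point Laplacian, as already used for the first bracketed part of \eqref{fd_lumpted_a}. Applying the lumped-mass quadrature $\int_\Omega (f\chi)^I$ on the right-hand side contributes $m_i\,f(x_i,0)$, where $m_i$ is the nodal mass. Dividing by $hH$, the scheme reads
\begin{equation*}
\tfrac{1}{h^2}[-U_{i-1}+2U_i-U_{i+1}] + \tfrac{1}{H^2}[-U_i^-+2U_i-U_i^+] = \gamma_i\, f(x_i,0),
\end{equation*}
with the very same $\gamma_i$ as in \eqref{fd_lumpted_b}: $\gamma_i=1$ for $i\ne N_0$ and $\gamma_{N_0}=\tfrac23$.

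Step 2 (Residual). Substitute $u=e^{-x/\eps}$ and $f=-\eps^{-2}u$. Since $u$ is $y$-independent the $H$-discretisation is exact, and using $2(\cosh(h/\eps)-1)/h^2=\eps^{-2}(1+\mathcal O(N_0^{-2}))$ gives
\[
\tau_i = u(x_i)\bigl[(\gamma_i-1)\,\eps^{-2} + \mathcal O(\eps^{-2}N_0^{-2})\bigr].
\]
Hence $|\tau_{N_0}|\ge \tfrac1{4e}\,\eps^{-2}$ for $N_0$ large, while $|\tau_i|\le C\eps^{-2}N_0^{-2}\,e^{-x_i/\eps}$ for $i\ne N_0$.

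Step 3 (Lower bound). Split the error $e=u-U$ as $e^{\rm loc}+e^{\rm rem}$, where $e^{\rm loc}$ is the response to the concentrated source $\tau_{N_0}\delta_{(x_{N_0},0)}$ and $e^{\rm rem}$ to the distributed remainder, both with zero data on $\partial\Omega$. Expand $e^{\rm loc}$ in discrete sine eigenfunctions in $y$: the mode $l$ satisfies a 1D problem on $(0,2\eps)$ for $-\partial_{x,h}^2+\mu_l I$ with $\mu_l=(4/H^2)\sin^2(l\pi/(2M))$. The hypothesis $\eps\le C_2H$ yields $\mu_l(2\eps)^2\le 16C_2^2$ uniformly in $l$, so the one-dimensional Green's function at the midpoint is of order $h\eps$ for each admissible $l$. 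Summing over odd modes (and using the value $2/M$ of $\psi_l^2$ at the $y$-midpoint) gives a 2D Green's function value at the source of order $\eps^2/N_0$; combined with $|\tau_{N_0}|\gtrsim\eps^{-2}$ this yields $|e^{\rm loc}(x_{N_0},0)|\ge c_0N_0^{-1}$. The remainder $e^{\rm rem}$ is $\mathcal O(N_0^{-2})$ by maximum-norm stability of $-\Delta_h^{\rm sym}$ applied to the layer-adapted bound on $\tau_i$, and is absorbed for $N_0$ large. This produces \eqref{Uu_LM_bound}.

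Main obstacle. Unlike in Lemma~\ref{lem_LM}, where the Green's function of $-\eps^2\triangle+I$ decays exponentially on scale $\eps$ and the analysis is essentially local to $\mathring\Omega$, the Green's function of $-\triangle$ is only algebraic and extends throughout $\Omega$, so the influence of boundary values on $\partial\mathring\Omega\setminus\partial\Omega$ cannot simply be neglected. The spectral reduction to 1D in $y$ sidesteps this difficulty precisely because the tensor-product mesh is uniform in each direction, but the technical work lies in verifying that each mode's contribution has the correct sign and that the sum retains the magnitude $\eps^2/N_0$ rather than being cancelled. An alternative, perhaps cleaner, route is to construct an explicit discrete barrier on $\Omega$, exploiting the M-matrix property of $-\Delta_h^{\rm sym}$ together with the single-point concentration of $\tau$; this would avoid Fourier entirely and mirror more closely the argument of \cite{Kopt_mc14} referenced in Lemma~\ref{lem_LM}.
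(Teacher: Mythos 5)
Your Step 1 matches the paper's starting point, but Step 3 contains a genuine gap: you argue on the whole of $\Omega$, splitting $e=u-U$ into a response to a single point source at $(x_{N_0},0)$ plus a remainder that you bound by discrete maximum-norm stability, and you diagonalize in $y$ by discrete sine modes. Both devices presuppose that the \emph{entire} domain carries the uniform tensor-product type-C mesh, so that (a) the five-point/lumped-mass representation and the truncation bound $|\tau_i|\le C\eps^{-2}N_0^{-2}e^{-x_i/\eps}$ hold at \emph{every} node of $\Omega$, and (b) the mesh is uniform in $y$ throughout, which is what the sine expansion needs. Neither is granted: the lemma (inherited from Lemma~\ref{lem_LM}) only prescribes the three-line subtriangulation $\mathring{\mathcal T}$ of type C in the thin strip $\mathring\Omega=(0,2\eps)\times(-H,H)$ and allows an \emph{arbitrary} triangulation $\mathcal T\supset\mathring{\mathcal T}$ elsewhere, where the discrete equations, their truncation errors, and even the M-matrix property you invoke in your alternative route are uncontrolled. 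Moreover, even in the concrete global type-C case your decomposition is inaccurate: the $\mathcal O(1)$ lumped-mass defect ($\gamma=\frac23$) occurs at every node of the line $x=x_{N_0}$, not only at $(x_{N_0},0)$, so the remainder is not $\mathcal O(N_0^{-2})$ as claimed (the extra contributions happen to help, but your proof as written does not establish this).

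The missing idea is the two-step localization used in the paper. First, one studies an auxiliary solution $\mathring U$ of the same lumped-mass scheme posed only on $\mathring{\mathcal T}$ with boundary data $u^I$ on $\partial\mathring\Omega$; since the error then vanishes at the nodes $(x_i,\pm H)$, the operator collapses to the one-dimensional three-point operator $L^h_x$ of \eqref{LLe} with zero-order term $\frac{2\eps^2}{H^2}$, and an explicit barrier $B_i=\eps^{-1}\min\{x_i,2\eps-x_i\}-\frac12\eps^{-2}x_i(2\eps-x_i)$ (or, when $C_2>2^{-1/2}$, the 1d discrete Green's function of $L^h_x$) converts the single $\mathcal O(1)$ residual $\frac13 e^{-1}\mathbbm{1}_{i=N_0}$ into the lower bound $[\mathring U-u](x_{N_0},0)\ge 2C_0N_0^{-1}$ — no information about $\mathcal T$ outside $\mathring\Omega$ is used. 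Second, with $Z:=U-\mathring U$, which is discrete-harmonic for $\LL^h_{\rm l.m.}$ inside $\mathring\Omega$, the discrete maximum principle gives $\max_{\partial\mathring\Omega}|U-u^I|=Z_{\max}$, and the elementary inequality $\max\{\mathring e-Z_{\max},\,Z_{\max}\}\ge\frac12\mathring e$ yields $\max_{\bar\Omega}|U-u^I|\ge\frac12\max_{\mathring\Omega}|\mathring U-u^I|$, transferring the local lower bound to the actual solution $U$ whatever the mesh does outside $\mathring\Omega$. Without this (or some substitute that handles arbitrary exterior triangulations), your global spectral argument does not prove the lemma as stated; at best it addresses the particular uniform mesh of Table~\ref{Laplace_table}, and even there the line source must be accounted for.
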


 \begin{proof}
 A normalized version of \eqref{fd_lumpted_a} for this case becomes
$$
\LL^h_{\rm l.m.} U(x_i,0):={\ts\frac{\eps^2}{h^2}}[-U_{i-1}+2U_i-U_{i+1}]
+{\ts\frac{\eps^2}{H^2}}[-U^-_{i}+2U_i-U_{i}^+]
=-\gamma_i F_i,
$$
where $F_i:=\eps^2\triangle u(x_i,0)=e^{-x_i/\eps}$, while $\gamma_i$ remains defined by \eqref{fd_lumpted_b}.
Note that
$\gamma_i=1-\frac13\mathbbm{1}_{i=N_0}$ and $\gamma_i F_i=\eps^2\triangle u(x_i,0)-\frac13e^{-1}\mathbbm{1}_{i=N_0}$,
where,
{\color{blue}for any condition $A$,
 the indicator function $\mathbbm{1}_{\!A}$ is defined to be} equal to $1$ if condition $A$ is satisfied, and $0$ otherwise.%
\\[0.1cm]
 %
\indent (i) First, consider the auxiliary finite element solution $\mathring{U}$
 obtained on the triangulation $\mathring{\mathcal T}$ in $\mathring{\Omega}$,
 subject to the boundary condition
 $\mathring{U}=u^I$ on
  $\pt\mathring{\Omega}$.
 At the interior nodes, $\mathring{U}$ satisfies 
 $\LL^h_{\rm l.m.} \mathring{U}(x_i,0)=-\gamma_i F_i$  for $i=1,\ldots, 2N_0-1$.
 We shall now prove that, for a sufficiently small constant $C_0$,
 one has
\beq\label{U_0_bound}
[\mathring{U}-u](x_{N_0},0)  \ge 2C_0 N_0^{-1}.
\eeq

Let $e:=\mathring{U}-u$ and $e_i:=e(x_i,0)$.
As $e(x_i,\pm H)=0$, so $\LL^h_{\rm l.m.} e(x_i,0)$ can be rewritten in the form of
a one-dimensional discrete operator $L^h_x$ applied the vector $\{e_i\}_{i=0}^{2N_0}$ as
\beq\label{LLe}
\LL^h_{\rm l.m.} e(x_i,0)
=L^h_xe_i:= {\ts\frac{\eps^2}{h^2}}[-e_{i-1}+2e_i-e_{i+1}]
+{\ts\frac{2\eps^2}{H^2}} e_{i}\,.
\eeq
On the other hand, 
$\LL^h_{\rm l.m.} \mathring{U}(x_i,0)=-\gamma_i F_i=\frac13e^{-1}\mathbbm{1}_{i=N_0}-\eps^2\triangle u(x_i,0)$,
so the standard truncation error estimation yields
$
L^h_xe_i=\frac13e^{-1}\mathbbm{1}_{i=N_0}+{\mathcal O}\bigl({\ts\frac{ h^2}{\eps^2}}\bigr)$.

To simplify the presentation,  we shall complete the proof of \eqref{U_0_bound} under the condition $C_2\le 2^{-1/2}$, while a comment on the case $C_2>2^{-1/2}$ will be given at the end of this part of the proof.
Introduce the barrier function
$$
B_i:=\eps^{-1}\min\{x_i,\,2\eps-x_i\}-{\textstyle \frac12}\eps^{-2} x_i(2\eps-x_i).
$$
A calculation shows that $L^h_xB_i=\frac{2\eps}h\mathbbm{1}_{i=N_0}-1+\frac{2\eps^2}{H^2} B_i$.
Combining \mbox{$B_i\le 1$} with $ \frac{2\eps^2}{H^2}\le 2 C_2^2\le 1$, we arrive
at $L^h_xB_i\le \frac{2\eps}h\mathbbm{1}_{i=N_0}$.
As the operator $L^h_x$ satisfies the discrete maximum/comparison principle,
while $L^h_xe_i=\frac13e^{-1}\mathbbm{1}_{i=N_0}+{\mathcal O}\bigl({\ts\frac{ h^2}{\eps^2}}\bigr)$,
we conclude that
$e_i\ge \frac13e^{-1} \frac h{2\eps}B_i - {\mathcal O}\bigl({\ts\frac{ h^2}{\eps^2}}\bigr)$.
Noting that $B_{N_0}={\textstyle \frac12}$, while $h=\eps N_0^{-1}$, yields $e(x_{N_0},0)=e_{N_0}\ge\frac1{12}e^{-1}N_0^{-1}-{\mathcal O}(N_0^{-2})$.
This immediately implies \eqref{U_0_bound} for any constant $C_0<\frac1{24}e^{-1}$ assuming  $N_0$ is sufficiently large.

For the case $C_2> 2^{-1/2}$, one needs to slightly modify the above proof, replacing $B_i$ by
$\frac13e^{-1}h^{-1}G_i^h$, where $G_i^h$ is
the one-dimensional Green's function for the discrete operator $L_x^h$ associated with $i=N_0$; see \cite[proof of Lemma~3.3]{Kopt_mc14} for further details.
\smallskip

 (ii)
 In view of (\ref{U_0_bound}), to establish (\ref{Uu_LM_bound}), it suffices to show that
\beq\label{e0}
 \max_{\bar\Omega}|U-u^I|\ge {\ts\frac12} \max_{\mathring{\Omega}}|\mathring{U}-u^I|=: {\ts\frac12}\mathring{e},
 \vspace{-0.1cm}
 \eeq
 where $u^I$ is the standard piecewise-linear interpolant of $u$.
 Let $Z:=U-\mathring{U}$ in $\mathring{\Omega}$ and $Z_{\max}:=\sup_{\mathring{\Omega}} |Z|$.
 Note that
 $\LL^h_{\rm l.m.} Z(x_i,0)=0$, while
 $Z=U-u^I$ on $\pt\mathring{\Omega}$.
 As, by the discrete maximum principle, $|Z|$ attains its maximum in $\mathring{\Omega}$ on $\pt\mathring{\Omega}$, so
 $\max_{\pt\mathring{\Omega}} |U-u^I|=Z_{\max}$.
On the other hand, $U-u^I=(\mathring{U}-u^I)+Z$
 yields
$\max_{\mathring{\Omega}}|U-u^I|\ge \mathring{e}-Z_{\max}$ 
As the maximum of the two values $\mathring{e}-Z_{\max}$ and $Z_{\max}$ exceeds their average ${\ts\frac12}\mathring{e}$,
the desired relation (\ref{e0}) follows.
 \end{proof}

\subsection{Linear finite elements without quadrature on triangulations of type~B}\label{ssec_theory_quad}

Recall the numerical results of \S\ref{sec_quadr},
where
the singularly perturbed equation~\eqref{spp} was considered, and
linear finite elements without quadrature on triangulations of type~B exhibited only first-order pointwise accuracy.

Note that Lemma~\ref{lem_quadr} applies to the triangulations of type B considered in \S\ref{sec_quadr},
 with $N_0=N/2$ for the uniform grid  in $(0,2\eps)\times(0,1)$, and $N^{-1}_0={\mathcal O}(N^{-1}\ln N)$,
 for the Shishkin grid in $(0,1)^2$.

It was also observed in \S\ref{sec_quadr} that on the triangulations of type~B, the lumped-mass version
exhibits second-order convergence rates, superior compared to the version without quadrature. 
To understand this, note that
the  subtriangulation $\mathring{\mathcal T}$ in  $\mathring\Omega$ being now of type B
implies that the lumped-mass version can again be represented
as \eqref{fd_lumpted_a}, only with $\gamma_i=1$ $\forall\,i$.
The new definition of $\gamma_i$ is superior compared to $\gamma_i=1-\frac13\mathbbm{1}_{i=N_0}$, which one has for
similar triangulations of type C. Consequently, for the triangulations of type B, we no longer observe only first-order
accuracy described by Lemma~\ref{lem_LM}.

As to the version without quadrature, one again has \eqref{fd_a}, so
the additional terms $-\frac1{12}\sum [U_{i}-U(x',y')]$ in $\LL^h U(x_i,0)$ lead to
the following version of \eqref{LLe}:
\beq\label{LLe_quadr}
\LL^h e(x_i,0)
=L^h_xe_i:= \Bigl({\ts\frac{\eps^2}{h^2}}-{\ts \frac1{12}}\bigr)[-e_{i-1}+2e_i-e_{i+1}]
+\Bigl({\ts\frac{2\eps^2}{H^2}}+\underbrace{\gamma_i-{\ts \frac4{12}}}_{{}=\frac23\;\forall\,i}\Bigr) e_{i}\,.
\vspace{-0.1cm}
\eeq
Hence, because of the non-symmetric patch of elements touching each node $(x_i,0)$ in the version of $\mathring{\mathcal T}$ of type~B,
the truncation error $\LL^h e(x_i,0)=-\LL^h u(x_i,0)$
includes the two additional terms
${\ts \frac1{12}}[-u(x_{i-1},0)+2u(x_i,0)-u(x_{i+1},0)]={\mathcal O}\bigl({\ts\frac{ h^2}{\eps^2}}\bigr)$
and ${\ts \frac2{12}}[u(x_i,0)-u(x_{i+1},0)]=\frac{h}{6\eps} e^{-x_i/\eps}+{\mathcal O}\bigl({\ts\frac{ h^2}{\eps^2}}\bigr)$.
Finally,
\beq\label{LLe_truncation_}
\LL^h e(x_i,0)=\ts\frac{h}{6\eps} e^{-x_i/\eps}
+{\mathcal O}\bigl({\ts\frac{ h^2}{\eps^2}}\bigr),
\eeq
where the additional term $\frac{h}{6\eps} e^{-x_i/\eps}$ causes only first-order accuracy, described by the following lemma.

\begin{lemma}\label{lem_quadr}
If the  subtriangulation $\mathring{\mathcal T}$ in  $\mathring\Omega$ is of type B,
Lemma~\ref{lem_LM} remains valid for the linear finite element method without quadrature.
\end{lemma}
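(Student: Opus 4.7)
My plan is to mirror the two-stage proof of Lemma~\ref{lem_lapl}. Stage~(ii) of that proof—in which the global solution $U$ is compared to the auxiliary finite element solution $\mathring U$ on $\mathring{\mathcal T}$ (with boundary data $u^I$ on $\partial\mathring\Omega$) via the discrete ``harmonic'' perturbation $Z:=U-\mathring U$—transfers verbatim, because it relies only on the linearity of $\LL^h$ and on the discrete maximum principle for $\LL^h$ on $\mathring{\mathcal T}$. Both properties remain valid for the no-quadrature method on a type~B subtriangulation provided $N_0^{-1}\le C_1$ is small enough that the reduced tridiagonal operator $L^h_x$ of \eqref{LLe_quadr} is an M-matrix. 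Hence the substance of the proof lies in showing
$$
e_{N_0}:=[\mathring U-u](x_{N_0},0)\ge 2C_0 N_0^{-1}.
$$

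To do this, I would exploit that $u=e^{-x/\eps}$ is independent of $y$ and that $e:=\mathring U-u$ vanishes on $\partial\mathring\Omega$; the vertical differences in $\LL^h e(x_i,0)=-\LL^h u(x_i,0)$ then collapse, and \eqref{LLe_quadr} together with \eqref{LLe_truncation_} yields
$$
L^h_x e_i=\tau_i:=\ts\frac{h}{6\eps}e^{-x_i/\eps}+{\mathcal O}\bigl({\ts\frac{h^2}{\eps^2}}\bigr),\qquad i=1,\ldots,2N_0-1,
$$
with $e_0=e_{2N_0}=0$. Crucially, unlike the point-source truncation that drove Lemma~\ref{lem_LM}, the leading truncation is now positive and of uniform size $h/\eps=1/N_0$ throughout $[0,2\eps]$, since $e^{-x_i/\eps}\ge e^{-2}$ there.

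This invites a direct barrier argument with the elementary quadratic $\tilde B_i:=x_i(2\eps-x_i)/(2\eps^2)$, for which $\tilde B_0=\tilde B_{2N_0}=0$, $\tilde B_i\le \tilde B_{N_0}=\frac12$, and $-\tilde B_{i-1}+2\tilde B_i-\tilde B_{i+1}=h^2/\eps^2$ exactly. A direct computation gives
$$
L^h_x\tilde B_i=\bigl({\ts\frac{\eps^2}{h^2}}-{\ts\frac{1}{12}}\bigr){\ts\frac{h^2}{\eps^2}}+\bigl({\ts\frac{2\eps^2}{H^2}}+{\ts\frac{2}{3}}\bigr)\tilde B_i\le\kappa,
$$
with $\kappa=\ts\frac43+C_2^2$ (using $\eps\le C_2 H$ and $\tilde B_i\le\frac12$). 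Setting $W_i:=\alpha\tilde B_i$ with $\alpha:=\frac{h\,e^{-2}}{12\eps\,\kappa}$ yields $L^h_x W_i\le\alpha\kappa=\frac{h\,e^{-2}}{12\eps}\le\tau_i$ for $N_0^{-1}\le C_1$ sufficiently small (to absorb the ${\mathcal O}(h^2/\eps^2)$ remainder). Since $L^h_x$ is an M-matrix—its off-diagonal entries $-(\eps^2/h^2-\frac{1}{12})$ are negative for $C_1$ small—the discrete comparison principle delivers $e_i\ge W_i$ for all $i$, and in particular $e_{N_0}\ge\alpha/2=\frac{e^{-2}}{24\kappa N_0}\ge 2C_0 N_0^{-1}$ for a suitable $C_0=C_0(C_2)>0$. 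Coupling this with Stage~(ii) of Lemma~\ref{lem_lapl} and the interpolation bound $\max_{\bar\Omega}|u-u^I|={\mathcal O}(N_0^{-2})$ yields \eqref{Uu_LM_bound}.

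The main obstacle, as I see it, is purely bookkeeping: verifying the ${\mathcal O}(h^2/\eps^2)$ bound on the truncation remainder uniformly in $i$ (a routine Taylor expansion, already sketched around \eqref{LLe_truncation_}) and carefully justifying the reduction of the two-dimensional discrete maximum principle for $\LL^h$ on $\mathring{\mathcal T}$ to the tridiagonal M-matrix property of $L^h_x$. No Green's-function refinement is required here, because the distributed and uniformly positive lower bound $\tau_i\gtrsim h/\eps$ makes a plain quadratic barrier suffice—this is the essential reason the argument is cleaner than in Lemma~\ref{lem_LM}.
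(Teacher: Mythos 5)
Your stage (i) is sound and follows the same route as the paper: reduce to the one-dimensional relation \eqref{LLe_quadr} with the truncation error \eqref{LLe_truncation_}, then apply a barrier and the comparison principle for the tridiagonal operator $L^h_x$. Where the paper takes the barrier $B(x_i/\eps)$ with $B$ defined through the two-point boundary value problem $(-\frac{d^2}{dt^2}+\frac23+C_2^2)B=e^{-t}$, $B(0)=B(2)=0$, you use the explicit quadratic $x_i(2\eps-x_i)/(2\eps^2)$, which suffices because the leading truncation term $\frac{h}{6\eps}e^{-x_i/\eps}$ is bounded below by $\frac{h}{6\eps}e^{-2}$ uniformly on $[0,2\eps]$; this is a legitimate and slightly more elementary variant, and your constants check out.

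The gap is in stage (ii). You assert that part (ii) of Lemma~\ref{lem_lapl} ``transfers verbatim'' because the discrete maximum principle for $\LL^h$ on $\mathring{\mathcal T}$ remains valid once $L^h_x$ is an M-matrix, and you file the verification under bookkeeping. This is exactly the point at which the no-quadrature case differs from the lumped-mass case, and the claim as stated is not correct: by \eqref{fd_a}, at a midline node $(x_i,0)$ the operator $\LL^h$ couples to the off-line nodes with coefficients $+\frac1{12}$ (for the diagonal neighbours $(x_{i+1},\pm H)$ of the type-B patch) and $\frac1{12}-\frac{\eps^2}{H^2}$ (for $(x_i,\pm H)$), and the latter is also positive precisely in the anisotropic regime $\eps\ll H$ of interest. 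So the two-dimensional stencil is not of nonnegative type, no standard discrete maximum principle applies to $\LL^h$, and the M-matrix property of the reduced operator $L^h_x$ (which holds trivially, since $\eps^2/h^2=N_0^2$) does not supply it. The conclusion you need, namely $\max_{\mathring\Omega}|Z|\le\max_{\pt\mathring\Omega}|Z|$ for $Z=U-\mathring U$, is still true, but it requires a quantitative argument rather than a maximum-principle citation: the paper rewrites $\LL^h Z(x_i,0)=0$ as $L^h_x Z_i=$ (a combination of boundary values of $Z$ with small total coefficient weight, of order $\frac13$), and then uses $L^h_x[1]\ge\frac23$ together with the one-dimensional maximum principle to get $\max_i|Z_i|\le\max\bigl\{|Z_0|,\,|Z_{2N_0}|,\,\frac32\max_i|L^h_xZ_i|\bigr\}\le\max_{\pt\mathring\Omega}|Z|$. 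Without an argument of this kind, your passage from \eqref{U_0_bound} to \eqref{e0}, and hence to \eqref{Uu_LM_bound}, is unsupported.
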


%
\begin{proof}
We imitate the argument used in the proof of Lemma~\ref{lem_lapl}; see also \cite[proof of Lemma~3.7]{Kopt_mc14}.

(i) To obtain~\eqref{U_0_bound}, we combine \eqref{LLe_quadr} with \eqref{LLe_truncation_}, and then employ
the barrier function $B_i:=B(x_i/\eps)$, where 
$(-\frac{d^2}{dt^2}+\frac23+C_2^2)B(t)=e^{-t}$ subject to $B(0)=B(2)=0$.
Clearly, $B$ is smooth and positive on $(0,2)$.
Now, again using the discrete maximum principle, one can show that
$e(x_i,0)\ge\frac{h}{6\eps}B(x_i/\eps)-{\mathcal O}\bigl({\ts\frac{ h^2}{\eps^2}}\bigr)$, which implies \eqref{U_0_bound}.

(ii) This part of the proof is as in the proof of Lemma~\ref{lem_lapl}, except
we need to be more careful when claiming that  $|Z|$ 
attains its  maximum  in $\mathring{\Omega}$ on $\pt\mathring{\Omega}$.
Note that, in view of \eqref{fd_a}, one can rewrite $\LL^h  Z_i=0$ using $L_x^h$ from \eqref{LLe_quadr} as
$$
L_x^hZ_i = -{\ts \frac1{12}}\bigl[Z(x_{i},H)+Z(x_{i},-H)+Z(x_{i+1},H)+Z(x_{i+1},-H)\bigr].
$$
So $|L_x^hZ_i|\le {\ts \frac1{3}}\max_{\pt\mathring{\Omega}}|Z|$.
{\color{blue}Now, in view of $L_x^h[1]\ge \frac23$, using the discrete maximum principle, we conclude that
$\max|Z_i|\le \max\bigl\{ |Z_0|,\, |Z_{2N_0}|,\,\frac32 \max|L_x^hZ_i| \bigr\}$.
Combining this with the bound on $|L_x^hZ_i|$ yields
 $\max|Z_i|\le \max_{\pt\mathring{\Omega}}|Z|$, so, indeed, $|Z|$ attains its maximum in $\mathring{\Omega}$ on $\pt\mathring{\Omega}$.}
\end{proof}

{\color{blue}

\section{Tested meshes in view of Hessian-based metrics}\label{sec_hessian}
Note that \eqref{ScjWal} is frequently considered a reasonable heuristic conjecture
to be used in the anisotropic mesh adaptation. In particular, when second-order methods are employed,
mesh generators frequently aim to produce meshes that are quasi-uniform under
Hessian-based metrics.

To be more precise,
given $u\in C^2(\bar\Omega)$ with its diagonalized Hessian $D^2 u=Q^t{\rm diag}\bigl(\lambda_i\bigr)Q$,
such a metric may be induced by the matrix
$\HH:=Q^t{\rm diag}\bigl(|\lambda_i|\bigr)Q+\theta I$, where the presence of the identity matrix $I$ multiplied by a constant $\theta\ge 0$ ensures that $\HH$ is positive-definite.
Alternatively, one can employ a similar
$\HH:=Q^t{\rm diag}\bigl(\max\{|\lambda_i|,\,\theta\}\bigr)Q$.

Let us look at the considered meshes in view of such metrics. Note that the considered exact solutions $e^{-x/\eps}$ and $x^{1/2}$
have singular Hessian matrices, so
reqire $\theta>0$.

1. In view of Remark~\ref{rem_Bakh}, the considered Bakhvalov mesh is quasi-uniform under the above metric with $\theta =O(1)$
(note that the mesh is almost uniform outside the layer region, with the mesh size close to $M^{-1}=4N^{-1}$).
Similarly, the uniform mesh, used in~\S\ref{sec_linear} in the domain $(0,2\eps)\times(0,1)$, is quasi-uniform under the above metric with $\theta =O(1)$.

2. In \S\ref{ssec_Lapl} we also use the mesh that is uniform under the 1d Hessian metric in the $x$-direction, with $M=\frac14N$ and $M=16$.
The resulting 2d meshes are close to uniform under the Hessian metric with, respectively,  $\theta=O(1)$ and $\theta\ll 1$.
Note that the latter choice corresponds to
 more anisotropic meshes and, on triangulations of type C,
 convergence rates becoming close to $1$ even for $\eps=1$ (see Table~\ref{Laplace_table_equi}).

 3. For the graded mesh in the $x$-direction, used for a singular problem in \S\ref{ssec_singular},
 a calculation (imitating the argument in Remark~\ref{rem_Bakh}) again shows that it is uniform under the 1d Hessian metric (with the obvious exception of the first mesh interval).

 4. The theoretical results of \S\ref{ssec_theory_quad}
 apply to
 the exact solution $u=e^{-x/\eps}$ and
 an arbitrary triangulation of $\Omega$, subject to certain conditions in the subdomain $\mathring{\Omega}$, including \eqref{Omega_0}.
In $\mathring{\Omega}$, the mesh $\{x_i\}$ in  the $x$-direction is quasi-uniform under the 1d Hessian metric, so
 setting the mesh size in the $y$-direction $H=O(\theta^{-1}N_0^{-1})$ produces
 a 2d mesh in
  $\mathring{\Omega}$ that is quasi-uniform under the 2d Hessian metric.
  Not only our theoretical results apply to arbitrarily small $\theta$, but if
  $\theta$ is sufficiently small, they apply even to the case $\eps=1$ (which is consistent with the lower part of Table~\ref{Laplace_table_equi}).}


\begin{thebibliography}{9}


 \bibitem{Bak}
  Bakhvalov,~N.~S., {\it On the optimization of methods for solving boundary value
  problems with boundary layers},
  Zh. Vychisl. Mat. Mat. Fis. {\bf9}, 841--859 (1969), (in Russian)

\bibitem{BrenScott}
{\color{blue}Brenner,~S.~C., Scott,~L.~R., {\it The mathematical theory of finite element methods},  Springer-Verlag, New York, third ed., 2008

\bibitem{ciarlet}
 Ciarlet,~P.~G., {\it  The finite element method for elliptic problems},  SIAM, Philadelphia, PA, 2002}

 \bibitem{DK14}
Demlow,~A., Kopteva,~N., {\it Maximum-norm a posteriori error estimates for singularly perturbed elliptic reaction-diffusion problems},
Numer. Math. {\bf133}, 707--742 (2016)

\bibitem{Kopt_mc14}
Kopteva,~N.,
{\it Linear finite elements may be only first-order pointwise accurate on anisotropic triangulations},
Math. Comp. {\bf83}, 2061--2070 (2014)

\bibitem{Ko_OR}
Kopteva,~N., O'Riordan,~E,
{\it Shishkin meshes in the numerical solution of singularly perturbed differential equations},
Int. J. Numer. Anal. Model. {\bf7}, 393--415 (2010)

\bibitem{NSSV06}
Nochetto,~R.~H., Schmidt,~A., Siebert,~K.~G., Veeser,~A., {\it Pointwise
  a posteriori error estimates for monotone semilinear problems}, Numer. Math.
  {\bf104}, 515--538 (2006)

\bibitem{SchWa_best}
Schatz,~A.~H., Wahlbin,~L.~B.,
{\it On the quasi-optimality in $L_\infty$ of the $\mathring{H}^1$-projection into finite element spaces},
Math. Comp. {\bf38}, 1--22 (1982)

\bibitem{SchWa_rd} Schatz,~A.~H., Wahlbin,~L.~B.,
{\it On the finite element method for singularly perturbed
reaction-diffusion problems in two and one dimensions},
Math. Comp. {\bf40}, 47--89 (1983)

 \bibitem{Shi}
Shishkin,~G.~I., {\it Grid approximation of singularly
 perturbed elliptic and parabolic equations}, Ur. O. Ran,
 Ekaterinburg, 1992 (in Russian)


\end{thebibliography}
\end{document}